\newcommand{\globalcolor}[1]{%
  \color{#1}\global\let\default@color\current@color
}
  \def\verbatim@processline{\the\verbatim@line\par}%
 \newtheorem{thm}{Theorem}[section]
 \newtheorem{cor}[thm]{Corollary}
 \newtheorem{lem}[thm]{Lemma}
 \newtheorem{pro}[thm]{Proposition}
 \theoremstyle{definition}
 \theoremstyle{remark}
 \newtheorem*{ex}{Example}
 \numberwithin{equation}{section}
\def\C{\mathbb C}
\def\d{\textrm{d}}
\def\D{\mathbb D}
\newcommand{\del}[2]{{\Delta(#1,#2)}}
\newcommand{\sups}[1]{{\sup_{#1}}}
\begin{document}

\date{}

\title[Bergman Projection, kernel estimates and Toeplitz operators]{Bergman Projections, Kernel $p$-Norm Estimates, and Toeplitz Operators with B\'{e}koll\'{e} and Bonami weights}
\author[]{}
\address{}
\email{}

\author[H. Arroussi]{Hicham Arroussi}
\address{Department of Mathematics and Statistics, University of Reading, England}
\address{Department of Mathematics and Statistics, University of Eastern Finland, Finland}
\email{arroussihicham@yahoo.fr}

\author[J.Taskinen]{Jari Taskinen}
\address{Department of Mathematics and Statistics,
 University of Helsinki, P.O. Box 68, 00014 Helsinki, Finland}
\email{jari.taskinen@helsinki.fi}

\author[C. Tong]{Cezhong Tong}
\address{Department of Mathematics
Hebei University of Technology
Tianjin 300401
China}
\email{ctong@hebut.edu.cn}

\author[Z.Zhang]{Zhan Zhang}
\address{Department of Mathematics and Statistics,
 University of Helsinki, P.O. Box 68, 00014 Helsinki, Finland}
\email{zhan.zhang@helsinki.fi}

\keywords{B\'{e}koll\'{e} weights, Carleson measures, Toeplitz operators, reproducing kernel estimates. }

\thanks{}

\subjclass[2010]{Primary: 47B35; Secondary: 32A36.}  


\begin{abstract}
In this paper, we establish entirely new $p$-norm estimates for reproducing kernels 
 to characterize the bounded and compact Toeplitz 
operators $T_{\mu}$ acting between weighted B\'{e}koll\'{e}--Bonami Bergman spaces 
$A^p_u(\mathbb{D})$ and $A^q_u(\mathbb{D})$ for all positive exponents $0 < p, q < \infty$. 
These operator-theoretic properties are completely described in terms of generalized 
Berezin transforms, averaging functions, and Carleson measures. We introduce two 
explicit conditions on the weights to ensure the boundedness of the 
weighted Bergman projection $P_u$, generalizing results from Hilbert spaces 
to Banach spaces.
Our work 
generalizes the main results of Tong, Li, and Arroussi \cite{TLA} from Hilbert spaces to the more general setting of Banach spaces.
\end{abstract}

\maketitle

\section{Introduction and Results}

Let $\mathbb{C}$ be the complex plane. For $r>0$, we define the Euclidean open disk centered at the origin with radius $r$ as $D(0,r) := \{z \in \mathbb{C} : |z| < r\}$. The unit disk is denoted by $\mathbb{D} := D(0,1)$. 

Given a positive Borel measure $\mu$ on $\mathbb{D}$ and $p>0$, the Lebesgue space $L^p(\mu)$ consists of all measurable functions $f$ on $\mathbb{D}$ satisfying
\[
\|f\|_{L^p(\mu)} := \left( \int_\mathbb{D} |f(z)|^p \, d\mu(z) \right)^{1/p} < \infty.
\]
For $p \geq 1$, $\|\cdot\|_{L^p(\mu)}$ defines a norm under which $L^p(\mu)$ forms a Banach space.

Let $dA$ denote the normalized Lebesgue area measure on $\mathbb{D}$. If $u : \mathbb{D} \to \mathbb{R}^+$ is a locally integrable weight function (i.e., $u \in L^1_{\mathrm{loc}}(dA)$), the weighted Lebesgue space $L^p(u)$ consists of all functions $f$ on $\mathbb{D}$ satisfying
\[
\|f\|_{L^p(u)} := \left( \int_\mathbb{D} |f(z)|^p u(z) \, dA(z) \right)^{1/p} < \infty.
\]
The weighted Bergman space $A^p(u)$ (also denoted by $A^p_u$) is the closed subspace of $L^p(u)$ consisting of analytic functions. In the unweighted case $u \equiv 1$, we simply write $A^p$. For the classical space $A^2$, the standard reproducing kernel is given by
\[
K_w(z) = \frac{1}{(1 - \bar{w}z)^2}, \quad z, w \in \mathbb{D}.
\]

Throughout this paper, we adopt the following notation. For a weight $u$ and a measurable set $E \subset \mathbb{D}$, we define $u(E) = \int_E u \, dA$ and $A(E) = \int_E dA$. The average of an integrable function $f$ over $E$ with respect to a measure $\mu$ is denoted by
\[
\langle f \rangle^{\mu}_{E} := \frac{1}{\mu(E)} \int_E f(z) \, d\mu(z).
\]

The Bergman projection $P$ is defined via integration  by
\[
Pf(z) = \int_\mathbb{D} \frac{f(w)}{(1 - \bar{w}z)^2} \, dA(w), \quad z \in \mathbb{D}.
\]
A classical problem in the theory of weighted Bergman spaces is to characterize the weights $u$ for which $P$ is bounded from $L^p(u)$ to $A^p(u)$. This problem was completely resolved by B\'{e}koll\'{e} and Bonami (\cite{B, BB}), who demonstrated that these weights are precisely those belonging to the class $B_p$ defined below.

\medskip
\noindent
\textbf{The $B_{p}$ Condition.} For $a \in \mathbb{D}$, let $S(a)$ denote the Carleson-type window
\[
S(a) = \left\{ \frac{a-z}{1-\bar{a}z} : \mathrm{Re}(\bar{a}z) \le 0 \right\}.
\]
A weight $u$ is said to satisfy the $B_p$ condition (written $u \in B_p$) if
\[
[u]_{B_{p}} := \sup_{a \in \mathbb{D}} \langle u \rangle^{dA}_{S(a)} \left( \langle u^{-p'/p} \rangle^{dA}_{S(a)} \right)^{p-1} < \infty,
\]
where $1/p + 1/p' = 1$. Recently, sharp estimates for the $L^p$-boundedness of the Bergman projection have been extensively investigated in  \cite{Pau},\cite{PER}, \cite{PR} and\cite{RTW} .

\textbf{Toeplitz Operators.} The Hilbert space $A^2(u)$ is equipped with the standard inner product
\[
\langle f,g \rangle_{A^2(u)} = \int_\mathbb{D} f(w) \overline{g(w)} u(w) \, dA(w), \quad f,g \in A^2(u),
\]
and its reproducing kernel is denoted by $K(z,w)$. Given a finite positive Borel measure $\mu$ on $\mathbb{D}$, the associated Toeplitz operator $T_\mu$ on $A^2(u)$ is defined formally by
\[
T_\mu f(z) := \int_\mathbb{D} f(w) K(z,w) \, d\mu(w), \quad z \in \mathbb{D}.
\]
This operator is well-defined provided that $\mu$ satisfies the compatibility condition
\begin{equation} \label{con}
\int_\mathbb{D} |K(\xi,z)|^2 \, d\mu(\xi) < \infty, \quad z \in \mathbb{D}.
\end{equation}

The pseudohyperbolic metric $\rho : \mathbb{D} \times \mathbb{D} \to [0,1)$ is defined by
\[
\rho(z,w) = \left| \frac{z-w}{1-\bar{w}z} \right|.
\]
For $z \in \mathbb{D}$ and $r>0$, the pseudohyperbolic disk centered at $z$ with radius $r$ is given by
\begin{equation} \label{2.1} \tag{2.1}
\Delta(z,r) := \{ w \in \mathbb{D} : \rho(z,w) < r \}.
\end{equation}
Whose Euclidean area is given by $|\Delta(z,r)| = \frac{4\pi r^2}{1-r^2}$.

Given a finite positive Borel measure $\mu$ on $\mathbb{D}$ and $r>0$, the weighted averaging function $\widehat{\mu_r}$ is defined as
\[
\widehat{\mu_r}(z) := \frac{\mu(\Delta(z,r))}{u(\Delta(z,r))}, \quad z \in \mathbb{D}.
\]

For $0 < p < \infty$, the normalized reproducing kernel of $A^p_u$ is defined by $k_{p,z}(w) := K(w,z) / \|K(\cdot,z)\|_{A^p_u}$. For the case $p=2$, we simplify this notation to $k_z(w) := k_{2,z}(w)$. The Berezin transform serves as a fundamental tool for analyzing the structural properties of Toeplitz operators. The Berezin transform of $T_\mu$ is given by
\[
\widetilde{\mu}(z) := \langle T_\mu k_z, k_z \rangle_{A^2(u)}, \quad z \in \mathbb{D}.
\]
A direct computation shows that
\begin{equation} \label{e3}
\langle T_\mu f, g \rangle_{A^2(u)} = \langle f, g \rangle_{L^2(d\mu)},
\end{equation}
which immediately yields the integral representation
\[
\widetilde{\mu}(z) = \int_\mathbb{D} |k_z(w)|^2 \, d\mu(w), \quad z \in \mathbb{D}.
\]
More generally, for $t>0$, the $t$-Berezin transform of $\mu$ is defined by
\[
\widetilde{\mu_t}(z) = \int_\mathbb{D} |k_{t,z}(w)|^t \, d\mu(w), \quad z \in \mathbb{D}.
\]

{\color{black}

The primary objective of this paper is to address a central gap in operator theory by extending key geometric results from Hilbert spaces to the more general setting of Banach spaces. While the boundedness, compactness, and structural properties of Toeplitz operators are well understood in  Hilbert spaces \cite{TLA}, analyzing these same properties within general Banach spaces $A^p_u(\mathbb{D})$ introduces considerable technical difficulties. In particular, the absence of explicit formulas or even sharp norm estimates for reproducing kernels in non-Hilbert spaces has, until now, obstructed a complete analysis.

To overcome these obstacles, we establish entirely new pointwise and $p$-norm estimates for reproducing kernels under generalized B\'ekoll\'e--Bonami weights. By introducing two precise structural conditions on the weight functions, $C_1$ and $C_2$ are defined below, we gain the necessary control to regulate their asymptotic behavior near the boundary. This approach allows us to analyze the kernel behavior for the entire range of positive exponents $0 < p, q < \infty$.

Crucially, this setup enables us to resolve a long-standing open problem in the field by establishing the exact criteria for the $L^{s}$-boundedness of the weighted Bergman projection $P_u$ on Banach spaces. Ultimately, these tools yield complete characterizations of several properties of  Toeplitz operators, expressed in terms of generalized Berezin transforms, averaging functions, and Carleson measures.

\begin{description}
	\item[\textbf{C1}] $|\nabla_\beta (\log u)(z)| \le L$ for all $z \in \mathbb{D}$,
	\item[\textbf{C2}] $\frac{a}{(1-|z|^2)^2} \le -\Delta(\log u)(z) \le \frac{b}{(1-|z|^2)^2}$.
\end{description}

 Now, we are now ready to state the main theorem of this paper. Our first novel result establishes pointwise estimates for the reproducing kernels of weighted Bergman spaces with B\'{e}koll\'{e}-Bonami weights, followed by their corresponding $p$-norms in our setting.
\begin{thm}\label{2.17b}
Let $r \in (0,1)$ and $p > 0$. Suppose $p_0 > 1$ and $u \in B_{p_0}$ satisfies conditions \textbf{C1} and \textbf{C2}. Then
\begin{description}
    \item[(a)]  \[
|K(z,w)| \lesssim u(\Delta(z,r))^{-1/2} u(\Delta(w,r))^{-1/2} e^{-\sigma\beta(z,w)}.
\]
    \item[(b)]  \[
    \|K_w\|_{A_u^p}^p \simeq u(\Delta(w,r))^{1-p}
    \]
\end{description}
where the implicit constants are independent of $z \in \D$.
\end{thm}

Using these key estimates, we establish equivalent necessary and sufficient conditions for the boundedness and compactness of $T_\mu$ acting between different  B\'{e}koll\'{e}--Bonami Bergman spaces $A_u^p(\mathbb{D})$ and $A_u^q(\mathbb{D}),$\, for $0 < p \le q < \infty$.}

\begin{thm}\label{tb}
Let $0 < p \le q < \infty$ and $\mu$ be a positive Borel measure on $\D$. Suppose the weight $u$ satisfies conditions $C_1$ and $C_2$. Then the following statements are equivalent:
\begin{description}
    \item[(i)] The Toeplitz operator $T_\mu: A^p_u(\D) \to A^q_u(\D)$ is bounded.
    \item[(ii)] For some (or all) $t > 0$, the $t$-Berezin transform satisfies
    \[ \sup_{z \in \D} \frac{\widetilde{\mu}_t(z)}{u(\Delta(z,r))^{\frac{1}{p} - \frac{1}{q}}} < \infty. \]
    \item[(iii)] For some (or all) $r \in (0,1)$, the weighted averaging function satisfies
    \[ \sup_{z \in \D} \frac{\widehat{\mu}_r(z)}{u(\Delta(z,r))^{\frac{1}{p} - \frac{1}{q}}} < \infty. \]
    \item[(iv)] $\mu$ is a $\frac{s(p+q')}{pq'}$-Carleson measure for $A^s_u$ when $q > 1$, or satisfies $\mu(\Delta(z,r)) \lesssim u(\Delta(z,r))^{1 + \frac{1}{p} - \frac{1}{q}}$ when $q \le 1$.
    
    Furthermore, we have
\[
\|T_\mu\| \simeq \left\| \frac{\widetilde{\mu_t}(z)}{u(\Delta(z,r))^{\frac{1}{p}-\frac{1}{q}}} \right\|_{L^\infty(\mathbb{D})} \simeq \left\| \frac{\widehat{\mu_r}(z)}{u(\Delta(z,r))^{\frac{1}{p}-\frac{1}{q}}} \right\|_{L^\infty(\mathbb{D})}.
\]
\end{description}
\end{thm}
The following theorem characterizes the compactness of Toeplitz operators in our setting.
\begin{thm}\label{tc}
Let $0 < p \le q < \infty$ and $\mu$ be a positive Borel measure on $\D$. Suppose the weight $u$ satisfies conditions $C_1$ and $C_2$. Then the following statements are equivalent:
\begin{description}
    \item[(i)] The Toeplitz operator $T_\mu: A^p_u(\D) \to A^q_u(\D)$ is compact.
    \item[(ii)] For some (or all) $t > 0$, the $t$-Berezin transform vanishes at the boundary:
    \[ \lim_{|z| \to 1^-} \frac{\widetilde{\mu_t}(z)}{u(\Delta(z,r))^{\frac{1}{p} - \frac{1}{q}}} = 0. \]
    \item[(iii)] For some (or all) $r \in (0,1)$, the weighted averaging function vanishes at the boundary:
    \[ \lim_{|z| \to 1^-} \frac{\widehat{\mu_r}(z)}{u(\Delta(z,r))^{\frac{1}{p} - \frac{1}{q}}} = 0. \]
    \item[(iv)] $\mu$ is a vanishing $\gamma$-Carleson measure, that is,
    \[ \lim_{|z| \to 1^-} \frac{\mu(\Delta(z,r))}{u(\Delta(z,r))^{\gamma}}=0, \quad \gamma = 1 + \frac{1}{p} - \frac{1}{q}.\]
\end{description}
\end{thm}

The next theorem provides necessary and sufficient conditions for the boundedness and compactness of the Toeplitz operator $T_\mu,$ in the case $0<q<p<\infty.$

\begin{thm}\label{qlp}
 Let $p_0>1$ and $u\in B_{p_0}$. When $0<q<p<\infty$, following assertion are equivalent:
\begin{description}
  \item [(i)] The Toeplitz operator $T_\mu:A_u^p(\mathbb{D})\to A_u^q(\mathbb{D})$ is compact;
  \item [(ii)] The Toeplitz operator $T_\mu:A_u^p(\mathbb{D})\to A_u^q(\mathbb{D})$ is bounded;
  \item [(iii)]$\widetilde{\mu_t}\in L^{\frac{pq}{p-q}}$;
  \item [(iv)]$\widehat{\mu_r}\in L^{\frac{pq}{p-q}}$;
  \item [(v)] $\mu$ is a $(p+1-\frac{p}{q})$-Carleson measure for $A_u^p$; 
  \item [(vi)] $\mu$ is a vanishing $(p+1-\frac{p}{q})$-Carleson measure for $A_u^p$.
\end{description}
\end{thm}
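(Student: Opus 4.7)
The plan is to exploit the collapse of boundedness and compactness characteristic of the regime $q<p$, running all six equivalences through Carleson measure theory for $A_u^s$. I would organize the proof into three blocks: the duality block $(ii)\Leftrightarrow(v)$, the Carleson-characterization block $(iii)\Leftrightarrow(iv)\Leftrightarrow(v)\Leftrightarrow(vi)$, and the compactness block $(i)\Leftrightarrow(vi)$, so that boundedness $(ii)$ is automatically upgraded to compactness $(i)$ along the way.

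For $(ii)\Leftrightarrow(v)$, I would start from the identity $\langle T_\mu f, g\rangle_{A^2(u)}=\int_\D f\bar g\,d\mu$ from \eqref{e3}. Combined with the Bekoll\'e--Bonami weighted duality $(A_u^q)^*\simeq A_u^{q'}$, which holds once $u\in B_{p_0}$ delivers boundedness of the Bergman projection on the relevant $L^{q'}(u)$, boundedness of $T_\mu: A_u^p \to A_u^q$ becomes equivalent to the bilinear estimate $|\int_\D fg\,d\mu| \lesssim \|f\|_{A_u^p}\|g\|_{A_u^{q'}}$. Writing $fg$ as a function in $A_u^s$ with $1/s=1/p+1/q'$ and appealing to a Bekoll\'e--Bonami-adapted atomic decomposition of $A_u^s$ (which is essentially the content of the reproducing kernel estimates proved earlier in the paper) reduces the bilinear estimate to the Carleson condition of order $p+1-p/q$.

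For the Carleson block, the equivalence $(iv)\Leftrightarrow(v)$ is the Luecking-type characterization of super-critical Carleson measures: in the regime $q<p$ the relevant Carleson exponent exceeds the critical threshold, so $\mu$ is $(p+1-p/q)$-Carleson for $A_u^p$ if and only if $\widehat{\mu_r}\in L^{pq/(p-q)}$. The upgrade $(v)\Rightarrow(vi)$ is then automatic, since $L^{pq/(p-q)}$-membership of a nonnegative function forces vanishing tails via a level-set argument that uses local doubling of $u(\Delta(z,r))$ under hyperbolic translations. Finally, $(iii)\Leftrightarrow(iv)$ follows by pointwise comparison of $\widetilde{\mu_t}$ and $\widehat{\mu_r}$: the lower kernel estimate $|k_{t,z}(w)|^t\gtrsim u(\Delta(z,r))^{-1}$ for $w\in\Delta(z,r)$ gives $\widetilde{\mu_t}\gtrsim\widehat{\mu_r}$, while a dyadic annular covering combined with the upper kernel decay gives the reverse bound up to smoothing at the hyperbolic scale, absorbable in the $L^{pq/(p-q)}$ norm.

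The compactness block $(i)\Leftrightarrow(vi)$ is routine once the other equivalences are in place: compactness follows from the vanishing Carleson condition via a cut-off decomposition $\mu=\mu\chi_{|z|<R}+\mu\chi_{|z|\geq R}$, the first piece producing a compact perturbation through the reproducing kernel representation and the second having arbitrarily small operator norm. The converse implication is immediate from $(i)\Rightarrow(ii)$. The main obstacle will be the duality step in Block~1, since the hypothesis permits $q<1$, where the standard pairing $(A_u^q)^*\simeq A_u^{q'}$ degenerates; there one must replace the duality argument by an atomic decomposition realizing $A_u^q$ as an $\ell^q$-combination of normalized reproducing kernels, and the validity of such a decomposition in the Bekoll\'e--Bonami setting depends crucially on the kernel estimates established earlier in the paper.
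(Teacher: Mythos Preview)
Your overall architecture is reasonable, and several pieces---$(iii)\Leftrightarrow(iv)$ by pointwise kernel comparison, $(v)\Leftrightarrow(vi)$ automatic in the subcritical regime, $(vi)\Rightarrow(i)$ by cut-off---match the paper's. The substantive divergence is in how you extract the $L^{pq/(p-q)}$ condition from boundedness of $T_\mu$.

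The paper does \emph{not} go through $(ii)\Leftrightarrow(v)$ by duality and atomic decomposition. Instead it proves $(ii)\Rightarrow(iv)$ directly by a Khinchine--Rademacher argument: one tests $T_\mu$ on random sums $G_t=\sum_k \lambda_k r_k(t)\,u(\Delta(z_k,r))^{-1/p}K_{z_k}$ over an $r$-lattice $\{z_k\}$, integrates $\|T_\mu G_t\|_{A_u^q}^q$ in $t$, and applies Khinchine's inequality to produce a square-function lower bound. Localizing to the lattice and using the diagonal kernel estimate then gives $\sum_k |\lambda_k|^q u(\Delta(z_k,r))\,\widehat{\mu_r}(z_k)^q \lesssim \|T_\mu\|^q\|\lambda\|_{\ell^p}^q$ for all $\lambda\in\ell^p$, and $\ell^{p/q}$--$\ell^{p/(p-q)}$ duality yields $\widehat{\mu_r}\in L^{pq/(p-q)}$. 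The paper then proves $(iv)\Rightarrow(v)$ in one line by subharmonicity, Fubini, and H\"older; it never needs the converse $(v)\Rightarrow(iv)$ as a standalone fact.

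Your route has a gap precisely at the step you phrase as ``atomic decomposition of $A_u^s$ reduces the bilinear estimate to the Carleson condition.'' The bilinear bound $\bigl|\int f\bar g\,d\mu\bigr|\lesssim\|f\|_{A_u^p}\|g\|_{A_u^{q'}}$ tests $\mu$ only against \emph{holomorphic} products $f\bar g$, whereas condition $(v)$ asks for $\int|f|^{p+1-p/q}\,d\mu\lesssim\|f\|_{A_u^p}^{p+1-p/q}$ with a non-holomorphic integrand; a factorization $h=fg$ for $h\in A_u^s$ does not bridge this, and testing on single atoms only produces an $L^\infty$ bound on $\widehat{\mu_r}$, not $L^{pq/(p-q)}$ membership. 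The mechanism that upgrades single-atom testing to an $\ell^{p/(p-q)}$ conclusion is exactly the randomization-plus-Khinchine step that your sketch omits. Likewise, the Luecking-type equivalence $(iv)\Leftrightarrow(v)$ you invoke is not supplied by the paper in the B\'ekoll\'e--Bonami setting, and its hard direction $(v)\Rightarrow(iv)$ would itself require essentially the same Khinchine argument. You are right, incidentally, that the range $q\le 1$ is delicate: the paper's own proof is written for $1<q<p$ and relies on $(A_u^q)^*\simeq A_u^{q'}$ in the step $(vi)\Rightarrow(i)$.
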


Finally, we characterize the essential norm of $T_\mu$ and determine its Schatten class membership. 
\begin{thm}\label{td}
Let $\mu$ a finite Borel measure on $\mathbb{D}$. Suppose that $T_\mu:A^p_u\to A^q_u$ is a bounded operator. For any $t>0$ and $r>0$, we have
\begin{equation*}
	\|T_\mu\|_e\simeq\limsup_{|z|\to1}\widetilde{\mu_t}(z)u(\Delta(z,r))^{\frac{q-p}{pq}}\simeq\limsup_{|z|\to1}\widehat{\mu_r}(z)u(\Delta(z,r))^{\frac{q-p}{pq}}.
\end{equation*}	
\end{thm}
\begin{thm}\label{sc}
	Let $h:\mathbb{R}^+\to\mathbb{R}^+$ a continuous increasing convex function. Let $\mu$ a positive Borel measure on $\mathbb{C}^n$ such that the Toeplitz operator $T_\mu:A_u^2\to A_u^2$ is compact. Then $T_\mu\in S_h(A_u^2)$ if and only if there is a constant $C$ such that
	\begin{eqnarray*}
		\int_{\mathbb{C}^n}h(C\widetilde{\mu}_2(z))u(\Delta(z,r))^{-1}u(z)dA(z)<\infty.
	\end{eqnarray*}
\end{thm}
Throughout this paper, we adopt the following standard notations. For two quantities $Q_1$ and $Q_2$, we write $Q_1 \lesssim Q_2$ to indicate that $Q_1 \le C Q_2$ for some positive constant $C$ independent of the essential variables, and $Q_1 \simeq Q_2$ if both $Q_1 \lesssim Q_2$ and $Q_2 \lesssim Q_1$ hold. For $p > 1$, we denote the conjugate exponent of $p$ by $p'$, which satisfies $1/p + 1/p' = 1$.

\section {Preliminaries and basic properties}
In this section, we review preliminary results on the weight class, characterizations of Carleson measures, and the Bergman kernels in the corresponding weighted spaces, which lead us to prove our main results. 

The pseudo-hyperbolic metric satisfies the following strong triangle inequality:
\begin{equation}
  \label{e2.1}\rho(z,w)\leq\frac{\rho(z,\zeta)+\rho(\zeta,w)}{1+\rho(z,\zeta)\rho(\zeta,w)},\quad\forall z,w,\zeta\in\D.
\end{equation}
For $0<r<1$, if $z,w\in\D$ satisfy $\rho(z,w)<r$, then the following approximations hold,
\begin{align}\label{e2.2}
  1-|z|\simeq 1-|w|\simeq |1-\bar wz|.
\end{align}
Moreover, for any $\zeta\in\mathbb D$, we have
\begin{align}
  \left|\frac{1-\bar\zeta z}{1-\bar\zeta w}\right|\simeq 1,
\end{align}
where the constants involved depend only on $r$. Furthermore, we define a modified pseudohyperbolic disk which is compareable with $\Delta(z,r)$(\ref{2.1}),
$$\Delta^\prime(z,r):=\left\{w\in\D:\frac{|z-w|}{1-|w|}<r\right\}.$$


We will also use the following class of weights which is denoted by condition $C_{p}$. For $p>1$,
a positive locally integrable weight $u$ belongs to the class  $C_{p}$ (or satisfies condition $C_{p}$) if
\begin{align*}
  [u]_{C_{p}}:=\sups{z\in\D}\langle u\rangle^{\d A}_{\Delta(z,r)}
  \left(\langle u^{-p'/p}\rangle^{\d A}_{\Delta(z,r)}\right)^{p-1}\lesssim1.
\end{align*}
It is known that this
class of weights is obtained for any $r\in(0,1)$ and $B_p\subset C_{p}$. To see this,
we note that for a given $r$, there is a $a'\in\D$ such that $\Delta(a,r)\subset S(a')$ with comparable volumes, see
more details in \cite{L}.


For $p>0$, the point evaluations on $A^p(u)$ are bounded linear functionals, as one can see from the following lemma.
\begin{lem}[Theorem 3.1 in \cite{L}]\label{l1.1}
  If $p_0>1$, $p>0$, $0<r<1$ and a weight $u\in C_{p_0}$, we have
  $$|f(z)|^p\leq C u(\Delta(z,r))^{-1}\int_{\Delta(z,r)}|f(w)|^pu(w)\d A(w),$$
  where the constant $C>0$ depends on $r,p$.
\end{lem}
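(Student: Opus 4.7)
The plan is to combine a sub-mean-value property for $|f|^s$ (valid for any $s>0$ since $f$ is analytic) with H\"older's inequality and the $C_{p_0}$ condition on $u$. The guiding principle is that one wants to pass from an unweighted area average of $|f|^p$ to a $u$-weighted average over $\Delta(z,r)$, and the $C_{p_0}$ condition is designed precisely for such comparisons.

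First I would exploit the subharmonicity of $|f|^{p/p_0}$ together with the fact that $\Delta(z,r)$ is a Euclidean disk containing a Euclidean ball centered at $z$ of area comparable to $A(\Delta(z,r))$. The area sub-mean-value inequality at $z$ then gives
\[
|f(z)|^{p/p_0}\lesssim\frac{1}{A(\Delta(z,r))}\int_{\Delta(z,r)}|f(w)|^{p/p_0}\,dA(w).
\]
The choice of exponent $p/p_0$ is made so that, after inserting the weight and applying H\"older, one recovers exactly $|f|^p u$ rather than some other power.

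Next, writing $|f|^{p/p_0}=(|f|^p u)^{1/p_0}\cdot u^{-1/p_0}$ and applying H\"older's inequality with conjugate exponents $p_0$ and $p_0'$ yields
\[
\int_{\Delta(z,r)}|f|^{p/p_0}\,dA\le\left(\int_{\Delta(z,r)}|f|^p u\,dA\right)^{1/p_0}\left(\int_{\Delta(z,r)}u^{-p_0'/p_0}\,dA\right)^{1/p_0'}.
\]
Raising to the $p_0$-th power (using $p_0/p_0'=p_0-1$) and multiplying and dividing by $u(\Delta(z,r))$ produces
\[
|f(z)|^p\lesssim\frac{1}{u(\Delta(z,r))}\int_{\Delta(z,r)}|f|^p u\,dA\;\cdot\;\langle u\rangle^{dA}_{\Delta(z,r)}\left(\langle u^{-p_0'/p_0}\rangle^{dA}_{\Delta(z,r)}\right)^{p_0-1}.
\]
By definition the last product is bounded above by $[u]_{C_{p_0}}$, which is finite by hypothesis, and the claim follows with constant $C\simeq[u]_{C_{p_0}}$.

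The main subtlety lies in choosing the right exponent $p/p_0$ in the first step; any other choice would leave a mismatch between the powers of $|f|$ and of $u$ after H\"older, preventing one from invoking the $C_{p_0}$ condition. The geometric comparison between Euclidean and pseudohyperbolic disks of radius $r\in(0,1)$ is standard for fixed $r$ and poses no real difficulty.
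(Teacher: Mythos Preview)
The paper does not actually prove this lemma; it is quoted verbatim as Theorem~3.1 of Luecking \cite{L} and used as a black box. Your argument is correct and is essentially the standard proof (and indeed Luecking's): subharmonicity of $|f|^{p/p_0}$ yields the unweighted area mean, the factorization $|f|^{p/p_0}=(|f|^{p}u)^{1/p_0}u^{-1/p_0}$ together with H\"older at exponents $(p_0,p_0')$ produces the $u$-weighted integral, and the leftover factor is precisely $\langle u\rangle^{dA}_{\Delta(z,r)}\bigl(\langle u^{-p_0'/p_0}\rangle^{dA}_{\Delta(z,r)}\bigr)^{p_0-1}\le[u]_{C_{p_0}}$. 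The only point worth making explicit is that the sub-mean-value inequality is applied on a Euclidean disk centered at $z$ contained in $\Delta(z,r)$ with comparable area; you noted this, and for fixed $r\in(0,1)$ the comparison constants depend only on $r$, which is exactly what the stated dependence of $C$ allows.
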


Within the B\'{e}koll\'{e}-Bonami class of weights, Bergman metric balls with centers sufficiently close in the pseudo-hyperbolic metric have comparable weighted areas.
\begin{lem}[Lemma 2.2 in \cite{Co}]\label{l2.2}
  Suppose $u\in C_{p}$ for some $p>1$. Let $t,s\in(0,1)$, and $z,w\in \D$ with $d(z,w)<r$ for some $r>0$. Then we have
  $$u(\Delta({z},{t}))\simeq u(\Delta({w},s)),$$ where the constant is independent of $z$ and $w$.
\end{lem}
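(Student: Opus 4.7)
The strategy is a two-step argument: first establish a strong Muckenhoupt-type estimate for $u$ that compares $u$-masses of nested pseudohyperbolic disks whenever their Lebesgue areas are comparable, and then use the strong triangle inequality \eqref{e2.1} to place $\del{z}{t}$ and $\del{w}{s}$ inside a common enclosing disk of comparable area.

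For the weight step, on any pseudohyperbolic disk $\Delta=\del{\zeta}{\rho}$ and any measurable $E\subset \Delta$, I would split $1=u^{1/p}u^{-1/p}$ and apply H\"older's inequality with exponents $p,p'$ to get
\[
A(E)\;\leq\; u(E)^{1/p}\left(\int_\Delta u^{-p'/p}\,\d A\right)^{1/p'}.
\]
Raising to the $p$-th power, dividing by $A(\Delta)^p$, and using the identity $p/p'=p-1$, the right-hand factor reassembles into exactly $[u]_{C_p}\,u(E)/u(\Delta)$, giving
\[
\left(\frac{A(E)}{A(\Delta)}\right)^p\;\leq\; [u]_{C_p}\,\frac{u(E)}{u(\Delta)}.
\]
In particular, whenever $E\subset\Delta$ satisfies $A(E)\simeq A(\Delta)$, one obtains $u(\Delta)\lesssim u(E)$, which together with the trivial bound $u(E)\leq u(\Delta)$ yields $u(E)\simeq u(\Delta)$.

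For the geometric step, \eqref{e2.1} applied with $d(z,w)<r$ gives $\del{w}{s}\subset \del{z}{(r+s)/(1+rs)}$. Setting $\rho_0:=\max\{t,(r+s)/(1+rs)\}\in(0,1)$, both $\del{z}{t}$ and $\del{w}{s}$ sit inside $\Delta_0:=\del{z}{\rho_0}$; the explicit formula for the Lebesgue area of pseudohyperbolic disks combined with \eqref{e2.2} ensures $A(\del{z}{t})\simeq A(\del{w}{s})\simeq A(\Delta_0)$, with constants depending only on $r,s,t$ (not on $z,w$). Applying the weight estimate of the previous paragraph twice, first with $E=\del{z}{t}\subset\Delta_0$ and then with $E=\del{w}{s}\subset\Delta_0$, chains into $u(\del{z}{t})\simeq u(\Delta_0)\simeq u(\del{w}{s})$, which is exactly the claim.

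The main obstacle is really the first step: one must notice that, thanks to the identity $p/p'=p-1$, the H\"older bound packages itself into precisely the $C_p$ quantity, and that this quantity is---by the hypothesis $u\in C_p$ together with the stated fact that the $C_p$ class is independent of the reference radius---controlled uniformly in both the center and the radius of the enclosing disk, so that it applies equally well to the enlarged disk $\Delta_0$. Once this self-improvement property is in hand, the remainder is a routine application of the pseudohyperbolic triangle inequality and of the comparability of Euclidean and pseudohyperbolic quantities near a fixed point.
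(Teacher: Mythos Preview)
The paper does not prove this lemma; it is quoted verbatim from \cite{Co} and used as a black box. So there is no in-paper argument to compare against.

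Your proof is correct and is essentially the standard one. The H\"older step is exactly right: writing $A(E)=\int_E u^{1/p}u^{-1/p}\,\d A$ and using the exponent relation $p/p'=p-1$ indeed produces
\[
\left(\frac{A(E)}{A(\Delta)}\right)^p\le [u]_{C_p}\,\frac{u(E)}{u(\Delta)},
\]
which is the key doubling-type inequality. The geometric step is also fine: the monotonicity of $(a,b)\mapsto (a+b)/(1+ab)$ on $[0,1)^2$ gives $\del{w}{s}\subset\del{z}{(r+s)/(1+rs)}$, and the explicit Euclidean-area formula for pseudohyperbolic disks together with \eqref{e2.2} gives the required area comparability with constants depending only on $r,s,t$. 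One small point worth making explicit: the statement only makes sense for $r<1$ (otherwise $d(z,w)<r$ is vacuous and the conclusion is false even for $u\equiv 1$), and you implicitly use this when asserting $\rho_0<1$ so that the $C_p$ bound applies to $\Delta_0$; the paper's remark that the $C_p$ class is independent of the reference radius is precisely what justifies invoking $[u]_{C_p}$ at radius $\rho_0$.
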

Similarly, if $u\in B_{p_0}$, it should be noted that
\begin{equation}
  \label{e1.3}u(\Delta(a,r))\simeq u(S(a')),
\end{equation}
whenever $\Delta(a,r)\subset S(a')$ with comparable areas.  For further details, see \cite{L}
and Lemma 5.23 in \cite{Z2}.


Our method relies on a popular decomposition of $\D$ which is used repeatedly in many papers, e.g., Theorem 1 in
\cite{TY}. We will use the following covering lemma.

\begin{lem}[Theorem 2.23 in \cite{Z2}]\label{l2.6}
  There exists a positive $N$ such that for any $0<r\leq 1$ we can
  find a sequence $\{a_k\}$ in $\D$ with the following properties.
  \begin{description}
    \item[(1)] $\D=\cup_k\Delta({a_k},{r})$;
    \item[(2)] The set $\Delta({a_k},{r/4})$ are mutually disjoint;
    \item[(3)] Each point $z\in\D$ belongs to at most $N$ of the sets $\Delta({a_k},{2r})$.
  \end{description}
\end{lem}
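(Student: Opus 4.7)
The plan is to construct $\{a_k\}$ via a standard maximal packing argument in the pseudohyperbolic metric and then verify the three properties in order. Fixing $r\in(0,1]$, I would apply Zorn's lemma (or a greedy recursion on a countable dense subset of $\D$) to obtain a sequence $\{a_k\}\subset\D$ that is maximal with respect to the separation condition $d(a_j,a_k)\geq r/2$ for $j\neq k$; separability of $\D$ then ensures this set is countable.

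Property (1) follows immediately from maximality: if some $z\in\D$ satisfied $d(z,a_k)\geq r/2$ for every $k$, the family $\{a_k\}\cup\{z\}$ would still be $r/2$-separated, contradicting the choice of $\{a_k\}$. Hence there exists $k$ with $d(z,a_k)<r/2<r$, i.e., $z\in\Delta(a_k,r)$. For property (2), I apply the strong triangle inequality \eqref{e2.1}: if $w\in\Delta(a_j,r/4)\cap\Delta(a_k,r/4)$ with $j\neq k$, then
\[
 d(a_j,a_k)\leq\frac{d(a_j,w)+d(w,a_k)}{1+d(a_j,w)d(w,a_k)}<\frac{r/2}{1}=\frac{r}{2},
\]
contradicting the separation.

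The main obstacle is property (3), the bounded multiplicity. Fix $z\in\D$ and suppose $z\in\Delta(a_{k_i},2r)$ for $i=1,\dots,m$, so $d(z,a_{k_i})<2r$. A second application of \eqref{e2.1} produces an $R=R(r)<1$ (the edge case where $R$ would exceed $1$ is handled trivially, since then a single ball already covers $\D$) such that $\bigcup_{i=1}^m\Delta(a_{k_i},r/4)\subset\Delta(z,R)$. Combining the Euclidean area formula $\operatorname{vol}(\Delta(z,t))=4\pi t^2(1-|z|^2)^2/(1-t^2|z|^2)^2$ with the comparability $1-|a_{k_i}|\simeq 1-|z|$ from \eqref{e2.2}, each $\operatorname{vol}(\Delta(a_{k_i},r/4))$ is comparable to $\operatorname{vol}(\Delta(z,r/4))$; dividing the total area of the disjoint packing by this common individual area gives $m\leq N$ for some $N=N(r)$. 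The delicate point is that the pseudohyperbolic triangle inequality \eqref{e2.1} does not contract as cleanly as the Euclidean one, which forces the $r$-dependence of $R$ and a small amount of care at the boundary of the allowed range of $r$.
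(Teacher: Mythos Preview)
The paper does not supply its own proof of this lemma; it is quoted directly from Zhu's book (Theorem~2.23 there), where the argument is precisely the maximal-packing construction you describe. So there is nothing in the paper to compare against beyond the cited source, and your method coincides with that source.

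One correction is needed in your edge-case remark. In Zhu's original formulation the balls are Bergman-metric balls, so the enlarged radius never forces the ball to become all of $\D$; the paper's transcription to pseudohyperbolic disks with the range $0<r\leq 1$ is somewhat loose. For $r\geq 1/2$ one has $\Delta(a_k,2r)=\D$ for every $k$, while infinitely many centers are still required to cover $\D$ by pseudohyperbolic disks of radius $r<1$, so property~(3) is simply false in that range. Your parenthetical that ``a single ball already covers $\D$'' does not rescue this: that would need $r\geq 1$, whereas $R\geq 1$ already occurs at $r=1/2$. For $r<1/2$ your argument is complete and correct and yields a bound $N=N(r)$; the $r$-dependence (weaker than the literal $\exists N\,\forall r$ order of quantifiers in the statement) is harmless, since every use of Lemma~\ref{l2.6} in the paper fixes a single small $r$ in advance.
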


\subsection{Carleson measures} 
A positive Borel measure $\mu$ on $\D$ is  a $q$-(vanishing) Carleson measure for $A^p(u)$ if the embedding
$I: A^p(u)\to L^q(\d\mu)$ is (compact) bounded. The following two lemmas give the characterizations of $q$-(vanishing) Carleson measure for $A^p(u),$ when $0< p\leq q$ and  $u$ belongs to our class of weights.\\ To prove our main results, we need some characterizations of the bounded and compact embedding operator $I,$ whenever $0<p,q<\infty,$ which are presented in the following lemmas.
\begin{lem}[Theorem 3.1 in \cite{T}]\label{Cm}
  \label{l2.5} Suppose $q\geq p>0$, $p_0>1$ and $0<r<1$. Let $u\in B_{p_0}$ be a weight and $\mu$ be a
  positive Borel measure on $\D$. Then,
  the following conditions are equivalent.
  \begin{description}
    \item[(i)] The embedding $I:A^p(u)\to L^q(\d \mu)$ is bounded, that is
  \[\left(\int_\D|f(z)|^q\d \mu(z)\right)^{1/q}\lesssim\left(\int_\D|f(z)|^pu(z)\d A(z)\right)^{1/p}\]
  for every analytic function $f$ on $\D$;
    \item[(ii)] $\mu(S(a))\lesssim u(S(a))^{q/p}$ for every $a\in \D$;
    \item[(iii)]  There is an $r>0$ such that $\mu(\del{a}r)\lesssim u(\del{a}r)^{q/p}$ for every $a\in \D$;
    \item[(iv)] There is an  $r>0$ such that $\mu(\del{a_k}r)\lesssim u(\del{a_k}r)^{q/p}$ for the sequence $\{a_k\}_k$
    described in Lemma \ref{l2.6};
    \item[(v)] Denoted by
    \[g_w^s(z)=\frac1{u(\Delta(w,r))^{q/p}}\left(\frac{1-|w|^2}{1-z\bar w}\right)^{s},\]
    For any $s\geq 2p_0/p$
    \begin{equation}
      \label{e2.7}\|g_w^s\|_{L^q(\d \mu)}^q=\int_\D \left|\frac{1-|w|^2}{1-z\bar w}\right|^{qs}u(\del{w}r)^{-q/p}\d\mu(z)\lesssim 1.
    \end{equation}
  \end{description}
  Furthermore, the ``geometric norm" of the measure $\mu$, the $L^{q}(\d\mu)$
  norm of $g^s_w$ and
  the operator norm of the embedding are comparable:
  \[\sups{z\in\D}\widehat{\mu_r}(z)
  :=\sups{z\in\D}\frac{\mu(\del{z}r)}{u(\del{z}r)^{q/p}}\simeq\sups{w\in\D}\|g^s_w\|^q_{L^q(\d\mu)}
  \simeq\|I\|^q_{A^p(u)\to L^q(\d \mu)}.\]
\end{lem}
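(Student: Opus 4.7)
The plan is to establish the cycle $(a)\Rightarrow(e)\Rightarrow(b)\Leftrightarrow(c)\Leftrightarrow(d)\Rightarrow(a)$, after which the comparability of the three quantities at the end of the statement follows by tracking constants through each implication.

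For $(a)\Rightarrow(e)$, I would simply test the embedding on the family $g_w^s$; the task reduces to showing $\|g_w^s\|_{A^p(u)}\lesssim 1$ uniformly in $w$. This in turn rests on a B\'ekoll\'e--Bonami reproducing-kernel integral estimate of the form
\[
\int_\D\Big|\frac{1-|w|^2}{1-z\bar w}\Big|^{ps}u(z)\,dA(z)\lesssim u(\Delta(w,r)),\qquad s\geq 2p_0/p,
\]
whose proof decomposes $\D$ into pseudohyperbolic annuli around $w$, replaces $u$ on each annulus by an averaged quantity via Lemma~\ref{l2.2}, and then sums a geometric series using $u\in B_{p_0}\subset C_{p_0}$ together with the lower bound on $s$. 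I expect this kernel estimate to be the principal technical obstacle.

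The step $(e)\Rightarrow(b)$ is purely geometric: for $z\in S(a)$ we have $|1-\bar a z|\simeq 1-|a|^2$ directly from the definition of $S(a)$, so with $w=a$ the integrand in \eqref{e2.7} is $\simeq u(\Delta(a,r))^{-q/p}$ on $S(a)$, yielding $\mu(S(a))\lesssim u(\Delta(a,r))^{q/p}\simeq u(S(a))^{q/p}$, the last comparison being \eqref{e1.3}. The equivalences $(b)\Leftrightarrow(c)$ follow in the same spirit from \eqref{e1.3} and Lemma~\ref{l2.2}, while $(c)\Leftrightarrow(d)$ is immediate from the covering Lemma~\ref{l2.6} combined with Lemma~\ref{l2.2}.

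For $(d)\Rightarrow(a)$, which closes the loop, I would apply Lemma~\ref{l1.1} with an enlarged radius $r'>r$: for $z\in\Delta(a_k,r)$ the disk $\Delta(z,r'-r)$ lies in $\Delta(a_k,r')$, so
\[
|f(z)|^p\lesssim\frac{1}{u(\Delta(a_k,r'))}\int_{\Delta(a_k,r')}|f|^p\,u\,dA.
\]
Raising to the power $q/p\geq 1$, integrating against $\mu$ over $\Delta(a_k,r)$, and using $(d)$ together with Lemma~\ref{l2.2} to replace $u(\Delta(a_k,r))$ by $u(\Delta(a_k,r'))$, one obtains $\int_{\Delta(a_k,r)}|f|^q\,d\mu\lesssim\big(\int_{\Delta(a_k,r')}|f|^p u\,dA\big)^{q/p}$. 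Summing in $k$, the elementary inequality $\sum_k x_k^{q/p}\leq(\sum_k x_k)^{q/p}$ (valid since $q/p\geq 1$) combined with the bounded-overlap property of Lemma~\ref{l2.6} delivers $\|f\|_{L^q(d\mu)}^q\lesssim\|f\|_{A^p(u)}^q$ and, simultaneously, the claimed norm comparabilities.
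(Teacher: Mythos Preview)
The paper does not give its own proof of this lemma: it is quoted verbatim as Theorem~3.1 of \cite{T} and used as a black box throughout. There is therefore nothing in the present paper to compare your argument against.

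That said, your outline is essentially the standard route to such Carleson embedding theorems and matches what one finds in \cite{T} and in Constantin~\cite{Co}: test on normalized kernels for $(a)\Rightarrow(e)$, a geometric lower bound for $(e)\Rightarrow(b)$, the comparability of $S(a)$ and $\Delta(a,r)$ for $(b)\Leftrightarrow(c)\Leftrightarrow(d)$, and a covering--subharmonicity argument for $(d)\Rightarrow(a)$. Two small remarks. First, the normalization of $g_w^s$ as printed in the statement carries the exponent $q/p$, which is inconsistent with the displayed formula for $\|g_w^s\|_{L^q(d\mu)}^q$; the intended exponent is $1/p$, and your kernel estimate $\int_\D\big|\tfrac{1-|w|^2}{1-z\bar w}\big|^{ps}u\,dA\lesssim u(\Delta(w,r))$ is the correct target. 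Second, in $(d)\Rightarrow(a)$ the containment ``$\Delta(z,r'-r)\subset\Delta(a_k,r')$'' is phrased in Euclidean terms; for the pseudohyperbolic metric you should instead invoke the strong triangle inequality \eqref{e2.1} to choose an enlarged radius $R=R(r)$ with $\Delta(z,r)\subset\Delta(a_k,R)$ whenever $z\in\Delta(a_k,r)$. With these adjustments your sketch is sound.
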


The equivalence between \textbf{(i)} and \textbf{(iii)} was proved by Constantin in \cite{Co}. 

\begin{lem}[Theorem 3.3 in \cite{T}]\label{cvc}
Suppose $q\geq p>0$, $p_0>1$ and $0<r<1$. Let $u\in B_{p_0}$ be a weight and $\mu$ is a
  positive Borel measure on $\D$. Then $I_d: A_u^p\to L^q_\mu$ is compact if and only if:
  \begin{equation*}
  	\lim_{|z|\to 1^-}\frac{\mu(S(a))}{(u(S(a)))^{q/p}}=0.
  \end{equation*}
\end{lem}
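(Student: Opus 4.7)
The plan is to prove both directions, modeling the argument on the boundedness characterization in Lemma \ref{Cm} and tracking where ``bounded'' must be upgraded to ``vanishing at the boundary.'' A useful preliminary step is to note that by \eqref{e1.3} and the analogous estimate $\mu(\Delta(a,r))\simeq \mu(S(a'))$ whenever $\Delta(a,r)\subset S(a')$ with comparable areas, the stated condition in $S(a)$ is equivalent to
\[
\lim_{|a|\to 1^-}\frac{\mu(\Delta(a,r))}{u(\Delta(a,r))^{q/p}}=0,
\]
which is more convenient since $\Delta(a,r)$ is a disk localized around $a$. (I read the displayed condition as $|a|\to 1^-$; the statement as written has a minor notational mismatch between $z$ and $a$.)

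For necessity, I would use the test kernels $g_a^s(z)=u(\Delta(a,r))^{-1/p}\bigl((1-|a|^2)/(1-\bar a z)\bigr)^s$ from part \textbf{(e)} of Lemma \ref{Cm}. With $s\ge 2p_0/p$ these are uniformly bounded in $A^p(u)$. For fixed $z\in\D$, $|g_a^s(z)|\to 0$ as $|a|\to 1^-$, so $\{g_a^s\}_{|a|\to 1^-}$ is a normal family going to $0$ pointwise. In the Banach range $p\ge 1$ one extracts weak convergence to $0$; in the quasi-Banach range $p<1$ one argues directly that a uniformly bounded, pointwise-null family in $A^p(u)$ is mapped by a compact operator to a norm-null family. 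Compactness of $I_d$ then forces $\|g_a^s\|_{L^q(\mu)}\to 0$ as $|a|\to 1^-$. On the other hand, for $z\in\Delta(a,r)$ one has $|1-\bar a z|\simeq 1-|a|^2$ by \eqref{e2.2}, which gives $|g_a^s(z)|^q\gtrsim u(\Delta(a,r))^{-q/p}$ and the lower bound
\[
\|g_a^s\|_{L^q(\mu)}^q\ge \int_{\Delta(a,r)}|g_a^s|^q\,d\mu\gtrsim \frac{\mu(\Delta(a,r))}{u(\Delta(a,r))^{q/p}},
\]
yielding the vanishing condition.

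For sufficiency, given a bounded sequence $\{f_n\}\subset A^p(u)$, Lemma \ref{l1.1} makes $\{f_n\}$ uniformly bounded on compact subsets of $\D$, hence normal; after passing to a subsequence, $f_n\to f$ uniformly on compacta with $f\in A^p(u)$. It suffices to show $\|f_n-f\|_{L^q(\mu)}\to 0$. Fix $\epsilon>0$ and split $\mu=\mu|_{\overline{D(0,\rho)}}+\mu|_{\D\setminus\overline{D(0,\rho)}}=:\mu^1_\rho+\mu^2_\rho$. The contribution from $\mu^1_\rho$ vanishes as $n\to\infty$ by uniform convergence on the compact set $\overline{D(0,\rho)}$. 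For the tail, the key point is that, if $\rho$ is chosen sufficiently close to $1$, the vanishing condition forces $\sup_{a\in\D}\mu^2_\rho(\Delta(a,r))/u(\Delta(a,r))^{q/p}<\epsilon$: for $|a|$ sufficiently close to $1$ this is immediate, while for $|a|$ bounded away from $1$ the disk $\Delta(a,r)$ lies in a fixed compact subset of $\D$, so $\mu^2_\rho(\Delta(a,r))=0$ once $\rho$ is large enough. Applying Lemma \ref{Cm} to $\mu^2_\rho$ then gives $\|f_n-f\|_{L^q(\mu^2_\rho)}\lesssim \epsilon^{1/q}\|f_n-f\|_{A^p(u)}$ uniformly in $n$.

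The main obstacle I anticipate is the second step in the sufficiency argument, namely transferring the pointwise vanishing at the boundary into a \emph{uniform} smallness of the Carleson constant for $\mu^2_\rho$. Working with the $\Delta(a,r)$ form of the condition sidesteps a potentially awkward covering argument that would be required in the $S(a)$ formulation, since a Carleson box centered at small $|a|$ still reaches to the boundary. A secondary technicality is the weak-convergence/normal-family step in the quasi-Banach range $0<p<1$, which is handled by extracting a locally uniformly convergent subsequence directly via Lemma \ref{l1.1} rather than appealing to duality.
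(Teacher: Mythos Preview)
The paper does not prove this lemma; it is quoted verbatim as Theorem~3.3 of \cite{T}, so there is no in-paper argument to compare against. Your outline is the standard proof of a vanishing Carleson characterization and is essentially correct: the test-function argument for necessity and the compact/tail splitting for sufficiency both go through exactly as you describe, with Lemma~\ref{Cm} supplying the quantitative embedding constant for the tail measure $\mu_\rho^2$.

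One small inaccuracy to fix: your preliminary reduction invokes an ``analogous estimate $\mu(\Delta(a,r))\simeq\mu(S(a'))$,'' but no such two-sided comparison holds for a general positive measure $\mu$ (only the trivial inequality $\mu(\Delta(a,r))\le\mu(S(a'))$ from inclusion). This does not damage the argument, since you actually prove both directions of the equivalence in the $\Delta(a,r)$ formulation, and the implication $S$-vanishing $\Rightarrow$ $\Delta$-vanishing is immediate from that trivial inequality together with \eqref{e1.3}. For the remaining implication (compact $\Rightarrow$ $S$-vanishing) you can run the same test-function computation directly over $S(a)$: writing $z=\varphi_a(w)$ with $\mathrm{Re}(\bar a w)\le 0$ gives $|1-\bar a z|=(1-|a|^2)/|1-\bar a w|\le 1-|a|^2$, so $|g_a^s(z)|\ge u(\Delta(a,r))^{-1/p}$ on all of $S(a)$, and hence $\|g_a^s\|_{L^q(\mu)}^q\gtrsim \mu(S(a))/u(S(a))^{q/p}$ just as in your $\Delta$-estimate.
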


\section{Kernel Estimates}
Let $K(w,z)=K_w(z)$ be  the reproducing kernels  of $A_u^2$, and $k_w(z)=\frac{|K_w(z)|}{\|K_w\|_{A_u^2}}$ their normalized kernels. 
For a standard reproducing kernel $K_w^\eta(z):=\frac{1}{(1-\overline{z}w)^{\eta+2}}$, by similar arguments in Lemma 3.1 of \cite{Co1}, when taking $\beta=0,$ we get the following lemma.
\begin{lem}\label{kep}
	Let $0<p<\infty$, $p_0>1$ and $u\in B_{p_0}$. Then we have the following estimate:
	\begin{equation}
	\|K_w^\eta\|_{A_u^p}\simeq \frac{u(\Delta(w,r))^{\frac{1}{p}}}{(1-|w|)^{2+\eta}}.
	\end{equation}

\end{lem}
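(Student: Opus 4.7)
\emph{Proof proposal.} Writing $\|K_w\|_{A^p_u}^p = \int_\D |1-\bar w z|^{-2p} u(z)\,dA(z)$, the plan is to establish the matching lower and upper bounds separately. The key ingredient throughout is that on any pseudohyperbolic disk $\Delta(z_0,r)$ the approximation \eqref{e2.2} renders $|1-\bar w z|$ essentially constant, comparable to $|1-\bar w z_0|$, so the whole problem collapses to a careful bookkeeping on a Whitney-type covering.

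For the lower bound, simply restrict the integral to $\Delta(w,r)$, on which $|1-\bar w z|\simeq 1-|w|$ by \eqref{e2.2}. This yields
\begin{equation*}
\|K_w\|_{A^p_u}^p \ \geq\ \int_{\Delta(w,r)} \frac{u(z)\,dA(z)}{|1-\bar w z|^{2p}} \ \simeq\ \frac{u(\Delta(w,r))}{(1-|w|)^{2p}}.
\end{equation*}

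For the upper bound, invoke Lemma \ref{l2.6} to write $\D = \bigcup_k \Delta(a_k,r)$ with finite overlap. On each disk $\Delta(a_k,r)$, \eqref{e2.2} gives
\begin{equation*}
\int_{\Delta(a_k,r)} \frac{u(z)\,dA(z)}{|1-\bar w z|^{2p}} \ \lesssim\ \frac{u(\Delta(a_k,r))}{|1-\bar w a_k|^{2p}},
\end{equation*}
reducing the task to the discrete Carleson-type inequality
\begin{equation*}
\sum_k \frac{u(\Delta(a_k,r))}{|1-\bar w a_k|^{2p}} \ \lesssim\ \frac{u(\Delta(w,r))}{(1-|w|)^{2p}}.
\end{equation*}
This is precisely the discrete counterpart of the test integral in Lemma \ref{Cm}(e): viewing $\mu:=\sum_k u(\Delta(a_k,r))\delta_{a_k}$ as an atomic measure, one checks via Lemma \ref{l2.2} and \eqref{e1.3} that $\mu(\Delta(a,r))\simeq u(\Delta(a,r))$, so $\mu$ is Carleson of the required type. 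When $p\geq p_0$ the admissibility condition $s\geq 2p_0/p$ of Lemma \ref{Cm}(e) is satisfied with $s=2$, and the desired summation bound follows at once.

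The principal obstacle is the subcritical range $0<p<p_0$, where Lemma \ref{Cm}(e) is not directly available with $s=2$. This range is what forces the recourse to the ``similar arguments'' the authors cite from Lemma 3.1 of Constantin: one replaces the direct Carleson test with an atomic-decomposition argument using the $C_p$ condition on $u$ (which persists across the full range since $B_{p_0}\subset C_{p_0}$ and the $C_p$ class is insensitive to the choice of radius) to recover the summation estimate with the correct exponent. This matching of the sub-exponent summation bound is the technical heart of the proof; once it is established, the two bounds combine to give the claimed equivalence.
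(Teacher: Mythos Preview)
The paper gives no argument for this lemma at all; it simply writes ``by similar arguments in Lemma~3.1 of \cite{Co1}, when taking $\beta=\gamma=0$''. So your proposal is already more detailed than what the authors provide. Your interpretation of $K_w$ as the classical kernel $(1-\bar w z)^{-2}$ is the right one (it is forced by the Constantin citation), and your lower bound as well as the upper bound via Lemma~\ref{Cm}(e) for $p\ge p_0$ are both correct and give a clean self-contained alternative to Constantin's direct kernel computation.

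The real gap is your treatment of $0<p<p_0$. You do not give an argument there; you only say one should ``replace the direct Carleson test with an atomic-decomposition argument using the $C_p$ condition''. No such argument will work, because the stated estimate is actually \emph{false} in that range. Take $u\equiv 1$, which lies in $B_{p_0}$ for every $p_0>1$. For $p=1$ the Forelli--Rudin estimates give
\[
\|K_w\|_{A^1_u}=\int_\D\frac{dA(z)}{|1-\bar w z|^2}\simeq\log\frac{1}{1-|w|},
\]
whereas the lemma predicts $u(\Delta(w,r))/(1-|w|)^{2}\simeq 1$; for $p<1$ the integral is bounded while the lemma predicts $(1-|w|)^{2-2p}\to\infty$. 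Constantin's Lemma~3.1 in \cite{Co1} carries a lower bound on the kernel exponent (essentially $2p$ must exceed a threshold depending on $p_0$) that the paper's statement suppresses. So the obstacle you flagged as ``the technical heart of the proof'' is not a technical matter that Constantin's methods resolve---it reflects an over-stated hypothesis in the lemma itself. Your Carleson-based proof is valid precisely in the range where the lemma is true.
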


Next, the following lemma shows the estimates of the reproducing kernel functions on the diagonal in our setting.
\begin{lem}[Lemma 4.1 in \cite{Co}]\label{l4.2}
Suppose $p_0>1$ and $u\in B_{p_0}$. Let $K(z,w)$ be the Bergman kernel in $A^2(u)$ and $r\in(0,1)$.
   Then we have the following estimate
  \begin{equation}\label{e5.1}
K(z,z)\simeq u(\Delta(z,r))^{-1},\quad z\in \D,
  \end{equation}
  where the constants involved are independent of $z.$
\end{lem}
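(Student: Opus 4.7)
The plan is to prove the two inequalities $K(z,z)\lesssim u(\Delta(z,r))^{-1}$ and $K(z,z)\gtrsim u(\Delta(z,r))^{-1}$ separately, starting from the reproducing identity $K(z,z)=\|K_z\|_{A^2(u)}^2$. For the upper bound, I would exploit the fact that $u\in B_{p_0}\subset C_{p_0}$ and apply Lemma~\ref{l1.1} to the analytic function $f=K_z$ with $p=2$, which gives
$$K(z,z)^2=|K_z(z)|^2\lesssim u(\Delta(z,r))^{-1}\int_{\Delta(z,r)}|K_z(w)|^2u(w)\,dA(w)\leq u(\Delta(z,r))^{-1}K(z,z),$$
so that dividing through by $K(z,z)$ yields the desired upper estimate.

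For the matching lower bound, I would use the extremal characterization
$$K(z,z)=\sup\bigl\{|f(z)|^2 : f\in A^2(u),\ \|f\|_{A^2(u)}\leq 1\bigr\},$$
which is a direct consequence of the reproducing formula and Cauchy--Schwarz. It then suffices to supply a single good test function. A natural choice is
$$g_z(w)=\left(\frac{1-|z|^2}{1-\bar z w}\right)^{s},\qquad s\geq p_0,$$
for which $g_z(z)=1$. The remaining task is to establish the norm bound $\|g_z\|_{A^2(u)}^2\lesssim u(\Delta(z,r))$. For this I would invoke Lemma~\ref{Cm}(e) applied to the measure $d\mu=u\,dA$ with $p=q=2$, after noting that $u\,dA$ is trivially a $1$-Carleson measure for $A^2(u)$ since $(u\,dA)(S(a))=u(S(a))$; the compatibility condition $s\geq 2p_0/p$ becomes $s\geq p_0$, which is exactly how the test function has been chosen. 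Combining these ingredients produces
$$K(z,z)\geq\frac{|g_z(z)|^2}{\|g_z\|_{A^2(u)}^2}\gtrsim u(\Delta(z,r))^{-1}.$$

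The main obstacle is the lower bound, since the upper bound is essentially immediate from the submean-type point-evaluation estimate. The real content of the lower bound is the kernel integral estimate
$$\int_\D |1-\bar z w|^{-2s}\,u(w)\,dA(w)\lesssim u(\Delta(z,r))(1-|z|^2)^{-2s},$$
but this is already encoded in the Carleson test-function condition (e), so beyond verifying that the hypotheses of Lemma~\ref{Cm} align the remaining work is routine.
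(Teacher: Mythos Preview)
Your proof is correct. Note, however, that the paper does not supply its own proof of this lemma: it is quoted verbatim as Lemma~4.1 of Constantin~\cite{Co} and used as a black box. So there is no in-paper argument to compare against.

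That said, your argument is essentially the standard one and matches how Constantin proves it. The upper bound via Lemma~\ref{l1.1} applied to $f=K_z$ is exactly right and immediate. For the lower bound, the extremal characterization plus the test function $g_z(w)=\bigl((1-|z|^2)/(1-\bar z w)\bigr)^{s}$ is the usual route; your observation that the required norm estimate $\|g_z\|_{A^2(u)}^2\lesssim u(\Delta(z,r))$ is precisely condition~(e) of Lemma~\ref{Cm} with $d\mu=u\,dA$ and $p=q=2$ is clean and avoids redoing the kernel-integral computation by hand. One small cosmetic point: you might state explicitly that $K(z,z)>0$ before dividing by it in the upper-bound step (this is automatic since $K(z,z)=\|K_z\|^2$ and $K_z\not\equiv 0$), but otherwise nothing is missing.
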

We now introduce one of our primary tools, which establishes the $A_{u}^{p}$ norm of the reproducing kernels 
for Bergman spaces with B\'{e}koll\'{e}--Bonami weights. This result is entirely novel, and we present 
two distinct approaches to its proof. The first method, detailed below, applies exclusively to the 
case $p>1$ by utilizing duality arguments, which need $u\in B_{p_0}$ be in some subclass such that Bergman projection $P_u$ is bounded on $L^p(u)$ (For example, in \cite{PER}). The second method, presented later in Theorem \ref{2.17}, 
covers the full range $0<p<\infty$ by incorporating two supplementary conditions on the weights.
\begin{lem}\label{ke1}
Suppose $p_0>1$ and $u\in B_{p_0}$ such that Bergman projection $P_u$ is bounded on $L^p(u)$. Let $K_w(z)=K(z,w)$ be the Bergman kernel in $A^2(u)$ and $r\in(0,1)$, $p^\prime$ be the conjugate of $p>1$, that is $\frac{1}{p}+\frac{1}{p^\prime}=1$.
   Then we have the following estimate
  \begin{equation}\label{e5.1}
\|K_w\|_{A_u^p}^p\simeq u(\Delta(w,r))^{1-p}
  \end{equation}
  where the constant involved is independent of $z\in\D$.
\end{lem}
\begin{proof}
Let us start with {\bf{ the lower bound}}.
Since the Bergman projection is bounded on $L^p(u)$, and the duality
$(A^p_u)^*\cong A^{p'}_u$ holds.
In particular,
\begin{equation}\label{eqdual}
\left\| K_w \right\|_{A^p_u}
= \sup\bigl\{ \left| \langle K_w, g \rangle_{A^2(u)} \right|
      : g\in A^{p'}_u,\; \left\| g \right\|_{A^{p'}_u}=1\bigr\}
= \sup\bigl\{ \left| g(w) \right|
      : \left\| g \right\|_{A^{p'}_u}=1\bigr\},
\end{equation}
where we used the reproducing property
$\langle K_w, g \rangle_{A^2(u)}=\overline{g(w)}$.

To construct a test function, we choose a parameter $s>\max\{2p_0/p',1\}$ and define
\[
g_w(z)
= \frac{(1- |w|^2)^s}{(1-z\bar w)^s}
= (1- |w|^2)^s\,K_w^{\eta}(z),\, where\,\,\,\, \eta= s-2.
\]
Note that $g_w(w)=1$.  By Lemma~\ref{kep} applied with
$\eta=s-2$ and exponent~$p'$,
\begin{equation}\label{eq:gnorm}
\left\| g_w \right\|_{A^{p'}_u}
= (1- |w|^2)^s
  \, \left\| K_w^{s-2} \right\|_{A^{p'}_u}
\simeq (1- |w|^2)^s
  \cdot\frac{u(\Delta(w,r))^{1/p'}}{(1- |w| )^{s}}
= u\bigl(\Delta(w,r)\bigr)^{1/p'}.
\end{equation}
Substituting the normalized test function
$g_w / \left\| g_w \right\|_{A^{p'}_u}$ into~\eqref{eqdual} yields
\[
\left\| K_w \right\|_{A^p_u}
\geq \frac{ |g_w(w)| }{ \left\| g_w \right\|_{A^{p'}_u} }
\simeq u\bigl(\Delta(w,r)\bigr)^{-1/p'}.
\]
For the $p$-th power and recalling that $p/p'=p-1$,
\begin{equation}\label{eq:lower}
 \left\| K_w \right\|^p_{A^p_u}
\gtrsim u\bigl(\Delta(w,r)\bigr)^{-p/p'}
= u\bigl(\Delta(w,r)\bigr)^{1-p}.
\end{equation}

Now, we prove {\bf{the upper bound.}}
Let $g\in A^{p'}_u$ with $\left\| g \right\|_{A^{p'}_u}=1$. Applying Lemma~\ref{2.1} with exponent~$p'$  gives

\begin{eqnarray*}
  |g(w)|^{p'}&\leq& C\,u\bigl(\Delta(w,r)\bigr)^{-1}
\int_{\Delta(w,r)} |g(\zeta)|^{p'}\,u(\zeta)\,dA(\zeta)\\
&\leq& C\,u\bigl(\Delta(w,r)\bigr)^{-1}\, \left\| g \right\|^{p'}_{A^{p'}_u}
= C\,u\bigl(\Delta(w,r)\bigr)^{-1}.  
\end{eqnarray*}
Hence
\begin{equation}\label{eq:ptwise}
|g(w)| \leq C^{1/p'}\,u\bigl(\Delta(w,r)\bigr)^{-1/p'}.
\end{equation}
Taking the supremum over all $g\in A^{p'}_u$ and
using~\eqref{eqdual}:
\[
\left\| K_w \right\|_{A^p_u}
= \sup_{ \left\| g \right\|_{A^{p'}_u}=1 } |g(w)|
\leq C^{1/p'}\,u\bigl(\Delta(w,r)\bigr)^{-1/p'}.
\]
Then for the $p$-th power:
\begin{equation}\label{eq:upper}
 \left\| K_w \right\|^p_{A^p_u}
\lesssim u\bigl(\Delta(w,r)\bigr)^{-p/p'}
= u\bigl(\Delta(w,r)\bigr)^{1-p}.
\end{equation}
 
\end{proof}
In what follows we include a concrete example illustrating that the upper bound generally breaks down when $p \le 1$.
\begin{ex} [Failure of the Kernel Norm Upper Bound for $p \le 1$]
 Consider the classic radial weight 
$$
u(z) = (1 - |z|^2)^{-\alpha},
$$
where $\alpha$ is a parameter between $0$ and $1$. This weight falls into the B\'{e}koll\'{e}--Bonami class, and its unnormalized reproducing kernel is known explicitly. 
Computing the $A_{u}^{p}$ norm of this kernel via classic Forelli--Rudin estimates yields three distinct geometric behaviors depending entirely on the value of $p$:
\begin{itemize}
    \item \textbf{Case $p > 1$:} The Forelli--Rudin theorem establishes that the norm integral scales exactly as $(1 - |w|^2)^{(2 - \alpha)(1 - p)}$. Since the weighted disk area scales identically, this matches our target equivalence perfectly.
    \item \textbf{Case $p = 1$:} The estimate gives a kernel norm that grows logarithmically to infinity as $|w| \to 1^{-}$, whereas the geometric weight term remains constant at $1$. Thus, the upper bound fails.
    \item \textbf{Case $p < 1$:} The norm integral converges uniformly to a positive constant independent of $w$ near the boundary. However, the geometric weight term vanishes completely, approaching $0$ as $|w| \to 1^{-}$.
\end{itemize}
Consequently, when $p \le 1$, the upper bound estimate breaks down entirely unless supplementary structural assumptions (such as our below conditions \textbf{C1} and \textbf{C2}) are introduced to stabilize the weight behavior near the boundary.
\end{ex}

\subsection{Regularity and Supplementary Weight Conditions}

Let $\rho(z,w)$  be the pseudohyperbolic metric on the unit disk $\mathbb{D}$. The hyperbolic metric $\beta(z,w)$ is defined via the inverse hyperbolic tangent by:
\begin{equation*}
\beta(z,w) := \tanh^{-1}(\rho(z,w))
\end{equation*}
For a differentiable function $f: \mathbb{D} \to \mathbb{R}$, its invariant hyperbolic gradient is defined by scaling the standard Euclidean gradient:
\begin{equation*}
|\nabla_\beta f(z)| := (1-|z|^2)|\nabla f(z)|
\end{equation*}
To establish optimal estimates for the $A^p_u$ norm of the reproducing kernel $K_w$ across all positive ranges $0 < p < \infty$, we introduce two supplementary regularizing assumptions on the weight function $u: \mathbb{D} \to \mathbb{R}^+$:
\begin{description}
	\item[\textbf{C1}] There exists a constant $L > 0$ such that the invariant gradient of $\log u$ satisfies the following:
	\begin{equation*}
	|\nabla_\beta (\log u)(z)| \le L, \quad \forall z \in \mathbb{D}
	\end{equation*}
	\item[\textbf{C2}] There exist constants $b \ge a > 0$ such that the negative Laplacian of $\log u$ satisfies the two-sided bound:
	\begin{equation*}
	\frac{a}{(1-|z|^2)^2} \le -\Delta(\log u)(z) \le \frac{b}{(1-|z|^2)^2}, \quad \forall z \in \mathbb{D}
	\end{equation*}
\end{description}
Weights satisfying these regularity constraints possess useful structural properties, as established in the following lemmas.

\medskip

\subsubsection{Structural Properties}
\textcolor{black}{
\begin{lem}[Local Uniformity]\label{ll2.9}
If a weight $u$ satisfies condition \textbf{C1}, then for any fixed radius $r \in (0,1)$ and all points $w$ in the pseudohyperbolic disk $\Delta(z,r)$, we have the local uniformity equivalence:
\begin{equation*}
u(w) \simeq u(z)
\end{equation*}
In particular, the weighted area of the pseudohyperbolic disk satisfies:
\begin{equation*}
u(\Delta(z,r)) \simeq u(z)(1-|z|^2)^2
\end{equation*}
\end{lem}
\begin{proof}
The gradient bound \textbf{C1} implies that $|\log u(w) - \log u(z)| \le L \beta(z,w)$. Since $\beta(z,w) \le \tanh^{-1}(r) =: \beta_r$ whenever $w \in \Delta(z,r)$, we obtain the uniform bound $|\log u(w) - \log u(z)| \le L \beta_r$. Exponentiating this inequality yields:
\begin{equation*}
e^{-L\beta_r} \le \frac{u(w)}{u(z)} \le e^{L\beta_r}
\end{equation*}
This proves $u(w) \simeq u(z)$. Integrating this local comparability across the area integral, we deduce:
\begin{equation*}
u(\Delta(z,r)) = \int_{\Delta(z,r)} u(w) \, dA(w) \simeq u(z) |\Delta(z,r)| \simeq u(z)(1-|z|^2)^2
\end{equation*}
This completes the proof.
\end{proof}
\begin{lem}[Weight Class Inclusion]
Let $p_0 > 1$. If a positive weight $u$ satisfies condition \textbf{C1}, then $u$ automatically belongs to the local weight class $C_{p_0}$.
\end{lem}
\begin{proof}
Let $r \in (0,1)$ be fixed. By Lemma~\ref{ll2.9}, the local average of $u$ over a pseudohyperbolic disk satisfies:
\begin{equation*}
\langle u \rangle^{dA}_{\Delta(z,r)} = \frac{1}{|\Delta(z,r)|} \int_{\Delta(z,r)} u(w) \, dA(w) \simeq \frac{u(z)|\Delta(z,r)|}{|\Delta(z,r)|} \simeq u(z)
\end{equation*}
Because condition \textbf{C1} applies symmetrically to any real power of the weight, an identical argument yields the dual estimate for the conjugate exponent:
\begin{equation*}
\langle u^{-p_0'/p_0} \rangle^{dA}_{\Delta(z,r)} \simeq u(z)^{-p_0'/p_0}
\end{equation*}
Combining these two results by taking their product, we find:
\begin{equation*}
\langle u \rangle^{dA}_{\Delta(z,r)} \left( \langle u^{-p_0'/p_0} \rangle^{dA}_{\Delta(z,r)} \right)^{p_0-1} \simeq u(z) \cdot \left( u(z)^{-p_0'/p_0} \right)^{p_0-1} = u(z) \cdot u(z)^{-1} = 1
\end{equation*}
Taking the supremum over all $z \in \mathbb{D}$ yields a finite constant, confirming that $u \in C_{p_0}$.
\end{proof}
}
\medskip

\subsubsection{Examples and Applications}
\textcolor{black}{
Crucially, standard radial weights and their perturbations naturally satisfy these structural conditions, as demonstrated below.
\begin{ex}[Standard Radial Weights]
Consider the classic weight $u(z) = (1-|z|^2)^\alpha$ with $\alpha > 0$, which belongs to the B\'{e}koll\'{e}--Bonami class $B_p$ for all $p > 1$. 
To verify \textbf{C1}, direct differentiation yields $\nabla(\log u)(z) = \frac{-2\alpha z}{1-|z|^2}$. Passing to the invariant gradient gives:
\begin{equation*}
| \nabla_\beta(\log u)(z) | = (1-|z|^2) \frac{2\alpha|z|}{1-|z|^2} = 2\alpha|z| \le 2\alpha
\end{equation*}
This verifies condition \textbf{C1} with a bounding constant $L = 2\alpha$. 
To verify \textbf{C2}, we evaluate the Laplacian in polar coordinates with $r = |z|$:
\begin{equation*}
\Delta (\log u) = \alpha \left( \frac{\partial^2}{\partial r^2} + \frac{1}{r}\frac{\partial}{\partial r} \right) \log(1-r^2) = \alpha \left( \frac{-2(1+r^2)}{(1-r^2)^2} - \frac{2}{(1-r^2)} \right) = \frac{-4\alpha}{(1-|z|^2)^2}
\end{equation*}
Therefore, $-\Delta(\log u)(z) = \frac{4\alpha}{(1-|z|^2)^2}$, which satisfies condition \textbf{C2} exactly with $a = b = 4\alpha$.
\end{ex}
\begin{ex}[Logarithmically Perturbed Weights]
Consider the refined weight $u(z) = (1-|z|^2)^\alpha \left( \log \frac{e}{1-|z|^2} \right)^\gamma$ on $\mathbb{D}$. Let $t(z) := 1-|z|^2$ and $\ell(z) := \log(e/t(z))$. On any Carleson window $S(z)$ with Euclidean width $h = 1-|z|$, we have $t(w) \simeq h$ for all $w \in S(z)$. Since the window area scales as $|S(z)| \simeq h^2$, integration yields:
\begin{equation*}
\int_{S(z)} u(w) \, dA(w) \simeq h \int_0^h t^\alpha \left( \log \frac{e}{t} \right)^\gamma dt \lesssim \frac{h^{\alpha+2}}{\alpha+1}\left(\log \frac{e}{h}\right)^\gamma
\end{equation*}
This directly implies that the local averages satisfy $\langle u \rangle^{dA}_{S(z)} \simeq h^\alpha \left(\log \frac{e}{h}\right)^\gamma$ and $\langle u^{-p'/p} \rangle^{dA}_{S(z)} \simeq h^{-\alpha p'/p} \left(\log \frac{e}{h}\right)^{-\gamma p'/p}$. Taking their product establishes that $[u]_{B_p} < \infty$, so $u \in B_p$.
To verify condition \textbf{C1}, we write $\log u = \alpha \log t + \gamma \log \ell$. Differentiating the logarithmic perturbation factor yields:
\begin{equation*}
|\nabla_\beta (\gamma \log \ell)| = (1-|z|^2) \left| \frac{\gamma}{\ell(z)} \nabla \ell(z) \right| = (1-|z|^2) \frac{\gamma}{\ell(z)} \frac{2|z|}{1-|z|^2} = \frac{2\gamma|z|}{\ell(z)} \le 2\gamma
\end{equation*}
Since $\ell(z) \ge 1$, combining this with the baseline radial gradient yields $|\nabla_\beta (\log u)| \le 2\alpha + 2\gamma$, satisfying \textbf{C1}.
To verify condition \textbf{C2}, evaluating the Laplacian of the second term in radial coordinates with $r = |z|$ yields:
\begin{equation*}
\Delta (\log \ell) = \frac{4}{\ell(z)(1-r^2)^2}\left(1 - \frac{r^2}{\ell(z)}\right)
\end{equation*}
Since $r < 1$ and $\ell(z) \ge 1$, the modifier term remains bounded within $(0,1)$. This establishes the two-sided condition \textbf{C2}:
\begin{equation*}
\frac{4(\alpha-\gamma)}{\ell(z)(1-|z|^2)^2} \le -\Delta(\log u)(z) \le \frac{4(\alpha+\gamma)}{\ell(z)(1-|z|^2)^2}
\end{equation*}
\end{ex}
}
\begin{ex} [Weights Perturbed by Bounded Analytic Functions]
\textcolor{black}{Consider the weight class $u(z) = (1-|z|^2)^\alpha e^{\gamma \mathrm{Re}\,g(z)}$, where $\alpha > 0$, $\gamma > 0$, and $g \in H^\infty(\mathbb{D})$. By Cauchy's estimate applied on a local disk of radius $\frac{1-|z|}{2}$, the derivative satisfies $|g'(z)| \le \frac{2\|g\|_\infty}{1-|z|}$, which implies that $g$ belongs to the Bloch space because $(1-|z|^2)|g'(z)| \le 4\|g\|_\infty$.
Since, $|\mathrm{Re}\,g(z)| \le \|g\|_\infty$ uniformly, the weight satisfies the global bounding comparison:
\begin{equation*}
e^{-\gamma \|g\|_\infty}(1-|z|^2)^\alpha \le u(z) \le e^{\gamma \|g\|_\infty}(1-|z|^2)^\alpha
\end{equation*}
Thus, $u \simeq (1-|z|^2)^\alpha$, which guarantees that $u \in B_p$ whenever $p > 1$. 
To verify \textbf{C1}, noting that $|\nabla(\mathrm{Re}\,g)(z)| = |g'(z)|$, Cauchy's estimate implies:
\begin{equation*}
|\nabla_\beta (\gamma \mathrm{Re}\,g)(z)| = \gamma(1-|z|^2)|g'(z)| \le 4\gamma \|g\|_\infty
\end{equation*}
Combining this with the baseline radial gradient yields $|\nabla_\beta(\log u)| \le 2\alpha + 4\gamma\|g\|_\infty$, verifying condition \textbf{C1}. 
Furthermore, since the real part of an analytic function is harmonic ($\Delta(\mathrm{Re}\,g) = 0$), the entire Laplacian expression simplifies exactly to:
\begin{equation*}
-\Delta(\log u)(z) = \frac{4\alpha}{(1-|z|^2)^2}
\end{equation*}
This ensures that condition \textbf{C2} holds dynamically with constants $a = b = 4\alpha$.
}
\end{ex}

\subsection{Proof of the first main theorem \ref{2.17b}}

We now prove that if the weight $u$ satisfies conditions \textbf{C1} and \textbf{C2}, the reproducing kernel $K(z,w)$ exhibits exponential decay away from the diagonal. The argument presented here is analogous to that of Hu, Lv, and Schuster \cite{HLS}.
\begin{pro}\label{pro2.11}
Fix $0 < R_0 < 1$ and $\sigma > 0$. Then, whenever $\beta(z,w) \le R_0$, we have
\[
|K(z,w)| \lesssim u(\Delta(z,r))^{-1/2} u(\Delta(w,r))^{-1/2} e^{-\sigma\beta(z,w)}.
\]
\end{pro}
\begin{proof}
If $\beta(z,w) \le R_0$, then it follows that
\[
e^{-\sigma\beta(z,w)} \ge e^{-\sigma R_0} =: C_{R_0}.
\]
Using the Cauchy–Schwarz inequality and Lemma \ref{l4.2}, we obtain
\[
\begin{aligned}
|K(z,w)| &\le \sqrt{K(z,z)K(w,w)} \\
&\lesssim u(\Delta(z,r))^{-1/2} u(\Delta(w,r))^{-1/2} \\
&= C_{R_0}^{-1} u(\Delta(z,r))^{-1/2} u(\Delta(w,r))^{-1/2} e^{-\sigma R_0} \\
&\le u(\Delta(z,r))^{-1/2} u(\Delta(w,r))^{-1/2} e^{-\sigma\beta(z,w)}.
\end{aligned}
\]
\end{proof}
\begin{pro}\label{pro2.12}
Fix $0 < R_0 < 1$ and $\sigma > 0$. Then, whenever $\beta(z,w) > R_0$, we have
\[
|K(z,w)| \lesssim u(\Delta(z,r))^{-1/2} u(\Delta(w,r))^{-1/2} e^{-\sigma\beta(z,w)}.
\]
\end{pro}
To prove Proposition~\ref{pro2.12}, we first establish some preparatory lemmas. 
\begin{lem}\label{l2.13}
For each $z \in \D$, there exists a $C^\infty$ function $g_z \colon \D \to \mathbb{R}$ satisfying the following conditions:
\begin{enumerate}[label=(\roman*), leftmargin=*]
    \item $|g_z(\xi)-\beta(z,\xi)| \le 1$ for all $\xi \in \D$;
    \item $|\nabla_\beta g_z(\xi)| \le 2$ for all $\xi \in \D$.
\end{enumerate}
\end{lem}

\begin{proof}
Since $\beta$ is a metric on $\D$ and $\bigl(\D,\frac{|dz|^2}{(1-|z|^2)^2}\bigr)$ is a complete Riemannian manifold, the regularized distance theorem of Greene and Wu (see \cite{GreWu}) guarantees the existence of a $C^\infty$ function $g_z$ satisfying these properties.
\end{proof}
\begin{lem}\label{l2.14}
For $\xi \in \D$, define $\omega_z(\xi) := e^{-2\sigma g_z(\xi)}$, where $g_z$ is given by Lemma~\ref{l2.13} and $\sigma > 0$. Then
\[
\omega_z(\xi) \simeq e^{-2\sigma\beta(\xi,z)}
\]
and
\[
|\bar{\partial}\omega_z(\xi)| \le \frac{2\sigma}{1-|\xi|^2}\omega_z(\xi).
\]
\end{lem}
\begin{proof}
By Lemma~\ref{l2.13}, we have
\[
-1 \le g_z(\xi) - \beta(\xi,z) \le 1,
\]
which implies
\[
e^{-2\sigma} \le \frac{e^{-2\sigma g_z(\xi)}}{e^{-2\sigma \beta(\xi,z)}} \le e^{2\sigma},
\]
thereby yielding the first estimate. For the second estimate, we observe that
\[
\bar{\partial}\omega_z = \bar{\partial}(e^{-2\sigma g_z}) = -2\sigma(\bar{\partial}g_z)\omega_z,
\]
whence
\[
|\bar{\partial}\omega_z| = 2\sigma|\bar{\partial}g_z|\omega_z.
\]
Writing $\xi = x + iy$, it follows that
\[
\bar{\partial}g_z = \frac{1}{2}\left(\frac{\partial g_z}{\partial x} + i\frac{\partial g_z}{\partial y}\right),
\]
and consequently,
\[
|\bar{\partial}g_z|^2 = \frac{1}{4}|\nabla g_z|^2 = \frac{1}{4(1-|\xi|^2)^2}|\nabla_\beta g_z(\xi)|^2.
\]
Given that $|\nabla_\beta g_z(\xi)| \le 2$, we obtain the desired bound:
\[
|\bar{\partial}\omega_z(\xi)| \le \frac{2\sigma}{1-|\xi|^2}\omega_z(\xi).
\]
\end{proof}
We are now ready to verify Proposition~\ref{pro2.12}.
\begin{proof}
Choose a smooth cut-off function $\chi_w$ such that $\chi_w|_{\Delta(w,r)} \equiv 1$, $\operatorname{supp}\chi_w \subset \Delta(w,r)$, and $|\bar{\partial}\chi_w| \le C(1-|w|^2)^{-1}$. Since $\beta(z,w) > R_0$, it follows that $\operatorname{supp}\chi_w \cap \Delta(z,r) = \emptyset$.
By Lemma~\ref{l1.1}, we have
\begin{equation}\label{e1}
|K(w,z)|^2 \le \frac{C}{u(\Delta(w,r))}\int_{\Delta(w,r)}|K(\xi,z)|^2 u(\xi)\,dA(\xi) = \frac{C}{u(\Delta(w,r))}P(\chi_w K_z)(z),
\end{equation}
where $P$ denotes the Bergman projection of $A^2_u$. The last equality is a consequence of $\chi_w|_{\Delta(w,r)} \equiv 1$ combined with the reproducing property $\langle \chi_w K_z,K_z\rangle = P(\chi_w K_z)(z)$.
An application of Lemma~\ref{l1.1} yields
\begin{equation}\label{e2}
|P(\chi_w K_z)(z)|^2 \le \frac{C}{u(\Delta(z,r))}\int_{\Delta(z,r)}|P(\chi_w K_z)|^2 u\,dA.
\end{equation}
Since $\chi_w|_{\Delta(z,r)} \equiv 0$, we notice that $P(\chi_w K_z) = -(I-P)(\chi_w K_z)$ holds on $\Delta(z,r)$. Thus,
\[
\int_{\Delta(z,r)}|P(\chi_w K_z)|^2 u\,dA = \int_{\Delta(z,r)}|(I-P)(\chi_w K_z)|^2 u\,dA.
\]
For $\xi \in \Delta(z,r)$, Lemma~\ref{l2.14} implies $\omega_z(\xi) \simeq e^{-2\sigma\beta(\xi,z)} \ge e^{-2\sigma\beta_r} =: c$, which gives
\[
\int_{\Delta(z,r)}|(I-P)(\chi_w K_z)|^2 u\,dA \le c^{-1}\int_{\Delta(z,r)}\omega_z|(I-P)(\chi_w K_z)|^2 u\,dA \lesssim \int_{\D}\omega_z|(I-P)(\chi_w K_z)|^2 u\,dA.
\]
Set $\phi = -\frac{1}{2}\log u$ so that $e^{-2\phi} = u$. Under condition \textbf{C2}, we have $\Delta\phi(\xi) \simeq (1-|\xi|^2)^{-2}$. Lemma \ref{l2.14} then yields
\[
\frac{1}{\Delta\phi}|\bar{\partial}\omega_z|^2 \lesssim \frac{4\sigma^2\omega_z^2}{(1-|\xi|^2)^2}(1-|\xi|^2)^2 = 4\sigma^2\omega_z^2.
\]
Applying the Berndtsson–Delin theorem \cite{berndtsson2001, delin1998}, we obtain
\[
\int_{\D}\omega_z|(I-P)(\chi_w K_z)|^2 u\,dA \lesssim \int_{\D}\omega_z\frac{|\bar{\partial}(\chi_w K_z)|^2}{\Delta\phi}u\,dA.
\]
Restricting the domain to $\operatorname{supp}\chi_w \subset \Delta(w,r)$, this bounds the expression by
\[
\lesssim \int_{\Delta(w,r)}\omega_z|K_z|^2|\bar{\partial}\chi_w|^2(1-|\xi|^2)^2 u\,dA.
\]
Noting that $|\bar{\partial}\chi_w| \lesssim (1-|w|^2)^{-1}$, $1-|\xi| \simeq 1-|w|^2$ for $\xi \in \Delta(w,r)$, and $\omega_z(\xi) \simeq e^{-2\sigma\beta(z,\xi)} \simeq e^{-2\sigma\beta(z,w)}$, we deduce
\[
\int_{\Delta(w,r)}\omega_z|K_z|^2|\bar{\partial}\chi_w|^2(1-|\xi|^2)^2 u\,dA \lesssim e^{-2\sigma\beta(z,w)}\int_{\Delta(w,r)}|K_z|^2 u\,dA \le e^{-2\sigma\beta(z,w)}K(z,z).
\]
Combining this bound with \eqref{e1} and \eqref{e2}, we arrive at
\[
|K(z,w)|^2 \lesssim \frac{1}{u(\Delta(w,r))}\frac{1}{u(\Delta(z,r))^{1/2}} \left(e^{-2\sigma\beta(z,w)}K(z,z)\right)^{1/2}.
\]
By Lemma~\ref{l4.2}, $K(z,z) \simeq u(\Delta(z,r))^{-1}$, which implies
\[
|K(z,w)|^2 \lesssim \frac{1}{u(\Delta(w,r))}\frac{1}{u(\Delta(z,r))}e^{-\sigma\beta(z,w)}.
\]
Taking square roots completes the proof.
\end{proof}
{\color{black}Combining Proposition~\ref{pro2.11} and Proposition~\ref{pro2.12} establishes the pointwise kernel estimate presented in part \textbf{(a)} of the main Theorem \ref{2.17b}.}

The following corollary provides an alternative upper bound in terms of the pseudohyperbolic metric.
\begin{cor}\label{cor2.16}
Fix $\sigma > 0$, $0 < \delta < \frac{\sigma}{2}$, and let $u$ satisfy \textbf{C1} and \textbf{C2}. Then
\[
|K(z,w)| \lesssim u(\Delta(z,r))^{-1/2} u(\Delta(w,r))^{-1/2} (1-\rho(z,w)^2)^\delta.
\]
\end{cor}
\begin{proof}
Since $1-\rho(z,w)^2 = \operatorname{sech}^2\beta(z,w)$ and $\cosh\beta(z,w) \le e^{\beta(z,w)}$, it follows that
\[
1-\rho(z,w)^2 \ge e^{-2\beta(z,w)}.
\]
Consequently, we have
\[
(1-\rho(z,w)^2)^\delta \ge e^{-2\delta\beta(z,w)} \ge e^{-\sigma\beta(z,w)},
\]
where the last inequality is ensured by the constraint $0 < \delta < \sigma/2$.
\end{proof}

{\color{black}
In what follows, we prove the norm kernel estimate, which establishes part \textbf{(b)}  of the main Theorem \ref{2.17b}.}
\begin{thm}\label{2.17}
Let $r \in (0,1)$ and $p > 0$. Suppose $p_0 > 1$ and $u \in B_{p_0}$ satisfies conditions \textbf{C1} and \textbf{C2}. Then
\begin{equation}\label{e5.1}
\|K_w\|_{A_u^p}^p \simeq u(\Delta(w,r))^{1-p},
\end{equation}
where the implicit constants are independent of $z \in \D$.
\end{thm}
\begin{proof}
To establish the lower bound, we apply Lemma~\ref{l1.1} and Lemma~\ref{l2.6} to find
\[
|K(w,w)|^p \lesssim \frac{1}{u(\Delta(w,r))}\int_{\Delta(w,r)}|K(w,z)|^p u(z)\,dA(z) \lesssim \frac{1}{u(\Delta(w,r))}\|K_w\|_{A_u^p}^p.
\]
Since $K(w,w) \simeq u(\Delta(w,r))^{-1}$ by Lemma~\ref{l4.2}, it follows that
\[
u(\Delta(w,r))^{-p} \lesssim \frac{\|K_w\|_{A_u^p}^p}{u(\Delta(w,r))},
\]
which proves that $u(\Delta(w,r))^{1-p} \lesssim \|K_w\|_{A_u^p}^p$.
For the upper bound, Corollary~\ref{cor2.16} and Lemma~\ref{ll2.9} imply
\[
\|K_w\|_{A_u^p}^p \lesssim u(\Delta(w,r))^{-p/2} \int_{\D} u(\Delta(z,r))^{-p/2}(1-\rho(z,w)^2)^{p\delta}u(z)\,dA(z).
\]
By Lemma~\ref{ll2.9}, we have $u(z) \simeq u(w)e^{L\beta(z,w)}(1-|z|^2)^{-2}$, which implies
\[
u(z)^{1-p/2} \le u(w)^{1-p/2}e^{|1-p/2|L\beta(z,w)}.
\]
Because $1-\rho(z,w)^2 \ge e^{-2\beta(z,w)}$, we deduce that
\[
u(z)^{1-p/2} \le u(w)^{1-p/2}(1-\rho(z,w)^2)^{-|p-2|L/4}.
\]
Setting $c := p\delta - \frac{|p-2|L}{4}$, the norm inequality simplifies to
\[
\|K_w\|_{A_u^p}^p \lesssim u(\Delta(w,r))^{-p/2}u(w)^{1-p/2} \int_{\D}(1-|z|^2)^{-p}(1-\rho(z,w)^2)^c\,dA(z).
\]
Substituting the identity
\[
1-\rho(z,w)^2 = \frac{(1-|z|^2)(1-|w|^2)}{|1-\bar{w}z|^2},
\]
the integral transforms into
\[
(1-|w|^2)^c \int_{\D}\frac{(1-|z|^2)^{c-p}}{|1-\bar{w}z|^{2c}}\,dA(z).
\]
An application of the Forelli–Rudin estimate implies that this integral is bounded, provided that
\[
c-p > -1 \quad\text{and}\quad 2c - (c-p) > 2,
\]
which simplifies directly to the parameter constraint
\[
c > \max(p-1, 2-p).
\]
When this admissibility condition holds, the integral evaluates to a bounded multiple of $(1-|w|^2)^{2-p}$, which yields the upper bound asset and completes the proof.
\end{proof}

\subsection{Applications of pointwise and $p$-norm estimates.}
\subsubsection{Bergman projection.} In what follows, we establish the boundedness of the Bergman projection $P_u$ via kernel estimates and Schur’s test. Although not listed in the introduction, this result constitutes one of the main theorems of the paper.

\begin{lem}\label{Schur}
    Let \(H \ge 0\) be a measurable function on $\D \times \D$, let \(\mu\) be a positive measure, and define the integral operator
    \[
    Tf(x) = \int_\D H(x,y) f(y) \, d\mu(y).
    \]
    Let \(1 < s < \infty\) and let \(s'\) be its conjugate exponent, i.e. \(\frac{1}{s} + \frac{1}{s'} = 1\). Suppose there exists a positive function \(\varphi > 0\) and constants \(M_1, M_2 > 0\) such that
    \[
    \int_\D H(x,y) \varphi(y)^{s'} \, d\mu(y) \le M_1 \varphi(x)^{s'} \quad \text{for a.e. } x \in \D,
    \]
    and
    \[
    \int_\D H(x,y) \varphi(x)^{s} \, d\mu(x) \le M_2 \varphi(y)^{s} \quad \text{for a.e. } y \in \D.
    \]
    Then \(T\) is bounded on \(L^s(\mu)\) with operator norm satisfying
    \[
    \|T\|_{L^s(\mu) \to L^s(\mu)} \le M_1^{1/s'} M_2^{1/s}.
    \]
\end{lem}

\begin{proof}
    It suffices to prove the estimate for \(f \ge 0\). We decompose the kernel pointwise as
    \[
    H(x,y) = \left( H(x,y)^{1/s'} \varphi(y) \right) \left( H(x,y)^{1/s} \varphi(y)^{-1} \right).
    \]
    Applying Hölder's inequality with conjugate exponents \(s'\) and \(s\) yields
    \[
    \begin{aligned}
    Tf(x)
    &= \int_\D \left( H(x,y)^{1/s'} \varphi(y) \right) \left( H(x,y)^{1/s} \varphi(y)^{-1} f(y) \right) d\mu(y) \\
    &\le \left( \int_\D H(x,y) \varphi(y)^{s'} \, d\mu(y) \right)^{1/s'}
    \left( \int_\D H(x,y) \varphi(y)^{-s} f(y)^s \, d\mu(y) \right)^{1/s} \\
    &\le M_1^{1/s'} \varphi(x) \left( \int_\D H(x,y) \varphi(y)^{-s} f(y)^s \, d\mu(y) \right)^{1/s},
    \end{aligned}
    \]
    where the last inequality follows from the first hypothesis on \(M_1\).

    Raising both sides to the power \(s\) and integrating with respect to \(x\) gives
    \[
    \begin{aligned}
    \int_\D (Tf(x))^s \, d\mu(x)
    &\le M_1^{s/s'} \int_\D \varphi(x)^s \int_X H(x,y) \varphi(y)^{-s} f(y)^s \, d\mu(y) \, d\mu(x) \\
    &= M_1^{s/s'} \int_\D f(y)^s \varphi(y)^{-s} \left( \int_\D H(x,y) \varphi(x)^s \, d\mu(x) \right) d\mu(y) \\
    &\le M_1^{s/s'} M_2 \int_\D f(y)^s \, d\mu(y) \\
    &= M_1^{s/s'} M_2 \|f\|_{L^s(\mu)}^s,
    \end{aligned}
    \]
    where we applied Fubini's theorem and the second hypothesis on $M_2$. Taking the $s$-th root of both sides yields
    \[
    \|Tf\|_{L^s(\mu)} \le M_1^{1/s'} M_2^{1/s} \|f\|_{L^s(\mu)}.
    \]
     This completes the proof.
\end{proof}

\begin{thm}\label{lemProj}
Let $p_0 > 1$. Suppose that the weight $u \in B_{p_0}$ satisfies conditions \textbf{C1} and \textbf{C2}. Then for every $1 < s < \infty$, the positive Bergman projection operator $P^+_u$ defined by
\[
P^+_u f(z) = \int_{\mathbb{D}} |K(z, w)| f(w) u(w) \, dA(w)
\]
is bounded on $L^s(u)$. Consequently, the standard weighted Bergman projection $P_u$, defined via 
\[
P_u f(z) = \int_{\mathbb{D}} K(z, w) f(w) u(w) \, dA(w),
\]
is also bounded on $L^s(u)$.
\end{thm}

\begin{proof}
To establish the bounded mapping property $\|P^+_u f\|_{L^s(u)} \leq M \|f\|_{L^s(u)}$, we apply Schur's Test (Lemma \ref{Schur}). Let $s' = \frac{s}{s-1}$ denote the conjugate exponent of $s$, so that $\frac{1}{s} + \frac{1}{s'} = 1$. According to Schur's Test, the integral operator $P^+_u$ with kernel $H(z, w) = |K(z, w)|$ is bounded on $L^s(u)$ if we can find a strictly positive Borel test function $\varphi: \mathbb{D} \to (0, \infty)$ and a finite constant $C > 0$ satisfying the following two symmetric integral inequalities:
\begin{equation}\label{eq:schur-cond1}
\int_{\mathbb{D}} |K(z, w)| \varphi(w)^{s'} u(w) \, dA(w) \leq C \varphi(z)^{s'} \quad \text{for a.e. } z \in \mathbb{D},
\end{equation}
\begin{equation}\label{eq:schur-cond2}
\int_{\mathbb{D}} |K(z, w)| \varphi(z)^s u(z) \, dA(z) \leq C \varphi(w)^s \quad \text{for a.e. } w \in \mathbb{D}.
\end{equation}

By Lemma~\ref{ll2.9},  for any fixed radius  $r \in (0, 1),$ we have
\[
u(\Delta(z, r)) \simeq u(z)(1 - |z|^2)^2.
\]
 Then, we  define a  test function $\varphi(\zeta)$  ,by a constant scaling parameter $\alpha \in \mathbb{R}$ to be specified later, as follows
\[
\varphi(\zeta) = [u(\Delta(\zeta, r))]^{-\alpha} \simeq [u(\zeta)(1 - |\zeta|^2)^2]^{-\alpha}.
\]
Substituting this  into conditions \eqref{eq:schur-cond1} and \eqref{eq:schur-cond2}, our objective reduces to establishing the existence of an admissible range for $\alpha$ such that:

\begin{equation}\label{eq:schur-eval1}
\int_{\mathbb{D}} |K(z, w)| [u(w)]^{1 - \alpha s'} (1 - |w|^2)^{-2\alpha s'} \, dA(w) \leq C [u(z)]^{-\alpha s'} (1 - |z|^2)^{-2\alpha s'},
\end{equation}

\begin{equation}\label{eq:schur-eval2}
\int_{\mathbb{D}} |K(z, w)| [u(z)]^{1 - \alpha s} (1 - |z|^2)^{-2\alpha s} \, dA(z) \leq C [u(w)]^{-\alpha s} (1 - |w|^2)^{-2\alpha s}.
\end{equation}

We begin by evaluating the first integral in \eqref{eq:schur-eval1}, which we denote as $\mathcal{I}_1(z)$. By  Corollary~\ref{cor2.16},  with an arbitrarily large exponent $\delta > 0$,
\[
|K(z, w)| \lesssim [u(z)]^{-1/2} (1 - |z|^2)^{-1} [u(w)]^{-1/2} (1 - |w|^2)^{-1} \left( \frac{(1 - |z|^2)(1 - |w|^2)}{|1 - z\bar{w}|^2} \right)^{\delta}.
\]
Substituting this algebraic bound directly into $\mathcal{I}_1(z)$ yields:
\begin{align}\label{lp}
\mathcal{I}_1(z) \lesssim \frac{(1 - |z|^2)^{\delta - 1}}{[u(z)]^{1/2}} \int_{\mathbb{D}} \frac{[u(w)]^{\frac{1}{2} - \alpha s'} (1 - |w|^2)^{\delta - 1 - 2\alpha s'}}{|1 - z\bar{w}|^{2\delta}} \, dA(w).
\end{align}
On the other hand, by Lemma~\ref{ll2.9}, we have  that for all $z, w \in \mathbb{D}$
\[
u(w) \leq u(z) e^{L \beta(z, w)} \implies u(w) \leq u(z) \left( \frac{|1 - z\bar{w}|^2}{(1 - |z|^2)(1 - |w|^2)} \right)^{\frac{L}{2}}.
\]
Let $\gamma = \frac{1}{2} - \alpha s'$. Raising to the power $\gamma$ yields:
\[
(u(w))^{\gamma} \leq (u(z))^{\gamma} \left( \frac{|1 - z\bar{w}|^2}{(1 - |z|^2)(1 - |w|^2)} \right)^{\frac{|\gamma| L}{2}}.
\]
Substituting this  into \eqref{lp}:
\[
\mathcal{I}_1(z) \lesssim (u(z))^{\gamma - \frac{1}{2}} (1 - |z|^2)^{\delta - 1 - \frac{\vert\gamma\vert L}{2}} \int_{\mathbb{D}} \frac{(1 - |w|^2)^{\delta - 1 - 2\alpha s' - \frac{\vert\gamma\vert L}{2}}}{|1 - z\bar{w}|^{2\delta - \vert\gamma\vert L}} \, dA(w).
\]
Noting that $\gamma - \frac{1}{2} = -\alpha s'$, we apply the standard Forelli--Rudin Theorem to integrate over the disk. The generalized integral of the form $$\int_{\mathbb{D}} \frac{(1-|w|^2)^A}{|1-z\bar{w}|^B} \, dA(w) \sim (1-|z|^2)^{A+2-B}$$ converges when :
\begin{enumerate}
    \item  $ A = \delta - 1 - 2\alpha s' - \frac{|\gamma|L}{2} > -1 \implies \delta > 2\alpha s' + \frac{|1/2 - \alpha s'|L}{2},$
\item  $ B - A = \delta + 1 + 2\alpha s' - \frac{|\gamma|L}{2} > 2 \implies \delta > 1 - 2\alpha s' + \frac{|1/2 - \alpha s'|L}{2}.$
\end{enumerate}
Because condition \textbf{C2} allows our decay mapping  $\delta$ to be chosen arbitrarily large, we can select an $\alpha > 0$ sufficiently small to satisfy both conditions simultaneously. Thus, an application of the Forelli--Rudin Theorem yields
\begin{align*}
\mathcal{I}_1(z)& \lesssim (u(z))^{-\alpha s'} (1 - |z|^2)^{\delta - 1 - \frac{\vert\gamma\vert L}{2}} \cdot (1 - |z|^2)^{\left(\delta - 1 - 2\alpha s' - \frac{\vert\gamma\vert L}{2}\right) + 2 - \left(2\delta - \vert\gamma\vert L\right)}\\
&= (u(z))^{-\alpha s'} (1 - |z|^2)^{-2\alpha s'} = \varphi(z)^{s'}.
\end{align*}
This completes the verification of the first Schur condition.
Repeating this process symmetrically for the second integral $\mathcal{I}_2(w)$ in \eqref{eq:schur-eval2} while replacing $s'$ with $s$, we obtain  the second Schur condition $\mathcal{I}_2(w) \leq C \varphi(w)^s$. Thus, Schur's test holds, that is,  $P^+_u$ is a bounded operator on $L^s(u)$.
Since  $|P_u f(z)| \leq P^+_u(|f|)(z),$ for $f\in L^s(u)$ and $z\in\mathbb{D}$, the boundedness of the positive operator $P^+_u$ automatically guarantees the boundedness of the projection $P_u$ on $L^s(u)$.
\end{proof}

\subsubsection{Averaging function and Berezin transform.}
The remaining lemmas in this subsection clarify the relationship between the Berezin transform $\tilde{\mu}_t(z)$ and the averaging function $\widehat{\mu_r}(z)$ for any parameters $r,t > 0$. The first lemma establishes pointwise estimates for $k_w(z)$ when $z$ and $w$ are sufficiently close, which would be useful later.

\begin{lem}[Lemma 2.7 in \cite{TLA}]\label{t1}
Suppose $p_0 > 1$ and $u \in B_{p_0}$. Then there exists a sufficiently small $\delta \in (0,1)$ such that 
\[
|k_{w}(z)|^2 \simeq K(z,z),
\]
whenever $z \in \Delta(w,\delta)$.
\end{lem}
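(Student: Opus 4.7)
The plan is to split the claimed equivalence $|k_w(z)|^2\simeq K(z,z)$ into independent upper and lower bounds, both valid on a suitably small pseudohyperbolic disk $\Delta(w,\delta)$. Writing $|k_w(z)|^2=|K(z,w)|^2/\|K_w\|_{A^2(u)}^2=|K(z,w)|^2/K(w,w)$ and invoking Lemma~\ref{l4.2} together with Lemma~\ref{l2.2}, one has $K(w,w)\simeq u(\Delta(w,r))^{-1}\simeq u(\Delta(z,r))^{-1}\simeq K(z,z)$ as soon as $d(z,w)$ is bounded, so it suffices to prove $|K(z,w)|^2\simeq K(w,w)^2$ whenever $z\in\Delta(w,\delta)$.

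The upper bound is immediate from Cauchy--Schwarz in $A^2(u)$:
\[
|K(z,w)|=|\langle K_w,K_z\rangle_{A^2(u)}|\le \|K_w\|_{A^2(u)}\|K_z\|_{A^2(u)}=K(w,w)^{1/2}K(z,z)^{1/2},
\]
which combined with $K(z,z)\simeq K(w,w)$ above yields $|K(z,w)|\lesssim K(w,w)$ on $\Delta(w,\delta)$.

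For the lower bound I introduce the holomorphic function $F(z):=K_w(z)/K(w,w)$, which satisfies $F(w)=1$ and $\|F\|_{A^2(u)}^2=1/K(w,w)\simeq u(\Delta(w,r))$. Applying Lemma~\ref{l1.1} to $F$ at a point $z\in\Delta(w,r_0)$ for a fixed $r_0\in(0,1)$ yields
\[
|F(z)|^2\lesssim u(\Delta(z,r))^{-1}\|F\|_{A^2(u)}^2\simeq u(\Delta(z,r))^{-1}u(\Delta(w,r))\simeq 1,
\]
using Lemma~\ref{l2.2}. Since $\Delta(w,r_0)$ contains a Euclidean ball $B(w,c_0(1-|w|))$ for a universal constant $c_0>0$, Cauchy's derivative estimate applied to the holomorphic $F$ gives $|F'(z_0)|\lesssim(1-|w|)^{-1}$ on the concentric half-ball $B(w,c_0(1-|w|)/2)$. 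Using $|z-w|\lesssim \delta(1-|w|)$ for $z\in\Delta(w,\delta)$ and integrating along the straight segment from $w$ to $z$ (which stays in the half-ball once $\delta$ is small) yields $|F(z)-1|\lesssim \delta$; choosing $\delta$ small enough forces $|F(z)|\ge 1/2$, so $|K(z,w)|^2\ge K(w,w)^2/4$ and the lower bound follows.

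The main obstacle is calibrating the Cauchy estimate uniformly in $w$: one needs to verify that $\Delta(w,r_0)$ always contains a Euclidean ball of radius proportional to $1-|w|$ and that $\Delta(w,\delta)$ fits inside $B(w,c_0(1-|w|)/2)$ once $\delta$ is small. Both are standard consequences of the M\"obius-image description of the pseudohyperbolic disks, so the argument requires no deep additional input beyond the lemmas already at hand.
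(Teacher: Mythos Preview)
Your argument is correct and is essentially the standard route. The paper does not supply its own proof of this lemma but cites it from \cite{TLA}, noting that the method there adapts Lemma~3.6 of Lin--Rochberg \cite{LR}; that method is precisely what you do: normalize so that $F(w)=1$, use the subharmonic mean-value inequality (Lemma~\ref{l1.1}) together with Lemma~\ref{l2.2} to get a uniform bound $|F|\le M$ on a fixed pseudohyperbolic disk $\Delta(w,r_0)$, transfer this to a Euclidean ball $B(w,c_0(1-|w|))$, apply Cauchy's estimate to obtain $|F'|\lesssim(1-|w|)^{-1}$ on the half-ball, and then use $|z-w|\lesssim\delta(1-|w|)$ on $\Delta(w,\delta)$ to force $|F(z)-1|\lesssim\delta$. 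The two geometric calibrations you flag as the ``main obstacle'' are indeed routine: $\Delta(w,\rho)$ is a Euclidean disk of radius comparable to $\rho(1-|w|)$ whose Euclidean center lies within $O(\rho(1-|w|))$ of $w$, which gives both inclusions at once.
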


\begin{lem} \label{ba1}
Let $\mu$ be a positive Borel measure on $\mathbb{D}$ and fix $r > 0$. Suppose that $\mu$ satisfies the integrability condition
\begin{equation}\label{con}
\int_\D |K(\xi,z)|^2 \,d\mu(\xi) < \infty.
\end{equation}
Then,	
\begin{equation*}
\tilde{\mu}(z) \simeq \widehat{\mu_r}(z).
\end{equation*}
\end{lem}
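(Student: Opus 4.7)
The plan is to prove the two-sided comparison $\widetilde\mu(z)\simeq\widehat{\mu_r}(z)$ by handling each inequality separately, using the pointwise kernel bounds from Lemmas \ref{l4.2} and \ref{t1} together with the weighted-area doubling of Lemma \ref{l2.2}.

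For the lower bound, I would localize the defining integral. By Lemma \ref{t1} there is a sufficiently small $\delta\in(0,1)$ with $|k_z(w)|^2\simeq K(z,z)$ on $\Delta(z,\delta)$, and by Lemma \ref{l4.2}, $K(z,z)\simeq u(\Delta(z,\delta))^{-1}$; Lemma \ref{l2.2} then identifies $u(\Delta(z,\delta))\simeq u(\Delta(z,r))$, so restricting gives
$$\widetilde\mu(z)\;\geq\;\int_{\Delta(z,\delta)}|k_z(w)|^2\,d\mu(w)\;\gtrsim\;\frac{\mu(\Delta(z,\delta))}{u(\Delta(z,r))}.$$
For the upper bound, I would apply the submean-value estimate of Lemma \ref{l1.1} to the analytic function $K_z(\cdot)\in A_u^2$,
$$|K_z(w)|^2\;\lesssim\;\frac{1}{u(\Delta(w,r))}\int_{\Delta(w,r)}|K_z(\zeta)|^2\,u(\zeta)\,dA(\zeta),$$
then integrate against $d\mu(w)$, swap orders of integration via Fubini (using the symmetric relation $w\in\Delta(\zeta,r)\Leftrightarrow\zeta\in\Delta(w,r)$), absorb $u(\Delta(w,r))^{-1}\simeq u(\Delta(\zeta,r))^{-1}$ by Lemma \ref{l2.2}, and divide by $K(z,z)=\|K_z\|_{A_u^2}^2$ to obtain
$$\widetilde\mu(z)\;\lesssim\;\int_\D|k_z(\zeta)|^2\,\widehat{\mu_r}(\zeta)\,u(\zeta)\,dA(\zeta).$$

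The main obstacle is two-fold. First, the lower bound cleanly produces only $\widetilde\mu(z)\gtrsim\mu(\Delta(z,\delta))/u(\Delta(z,r))$, so to recover $\widehat{\mu_r}(z)$ one must compare $\mu(\Delta(z,\delta))$ with $\mu(\Delta(z,r))$---which is immediate when $r\leq\delta$ but otherwise requires a finite-overlap covering from Lemma \ref{l2.6} together with the doubling of $u$. Second, and more substantively, the upper bound as written is only an $L^1$-weighted average of $\widehat{\mu_r}$ against the unit-mass density $|k_z|^2u$; this immediately delivers $\widetilde\mu(z)\lesssim\|\widehat{\mu_r}\|_\infty$, which is already enough for the supremum comparisons that drive Theorems \ref{tb}, \ref{tc} and \ref{qlp}, but the genuinely \emph{pointwise} sharpening demands exploiting the concentration of $|k_z|^2u\,dA$ on $\Delta(z,r)$---where $\widehat{\mu_r}(\zeta)\simeq\widehat{\mu_r}(z)$---with the off-diagonal tail controlled by the decay of the normalized kernel and the $B_{p_0}$-regularity of $u$.
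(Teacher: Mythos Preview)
Your approach is essentially the same as the paper's. For the lower bound, the paper simply assumes $0<r\le\delta$ (so your first obstacle disappears by hypothesis) and argues exactly as you do via Lemmas \ref{t1}, \ref{l4.2} and \ref{l2.2}. For the upper bound, the paper decomposes $\D$ over a lattice $\{a_j\}$ from Lemma \ref{l2.6} rather than applying your direct Fubini swap, but it reaches the same endpoint: it proves only
\[
\widetilde\mu(z)\;\lesssim\;\sup_{w\in\D}\widehat{\mu_r}(w),
\]
not the pointwise bound $\widetilde\mu(z)\lesssim\widehat{\mu_r}(z)$.

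Your second obstacle is therefore not a defect of your argument but of the lemma's statement: the pointwise upper bound is \emph{false} in general. If $\mu$ is a unit point mass at some $w_0\in\D$, then $\widehat{\mu_r}(z)=0$ whenever $d(z,w_0)>r$, while $\widetilde\mu(z)=|k_z(w_0)|^2>0$ for every $z\in\D$. What the paper's proof actually establishes---and what suffices for every subsequent application (Theorems \ref{tb}, \ref{tc}, \ref{qlp})---is $\|\widetilde\mu\|_{L^\infty}\simeq\|\widehat{\mu_r}\|_{L^\infty}$ together with the one-sided pointwise inequality $\widetilde\mu(z)\gtrsim\widehat{\mu_r}(z)$. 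Your proposal already delivers both of these, so there is nothing further to pursue; do not attempt the off-diagonal tail analysis you sketch at the end, since the conclusion it would aim for does not hold.
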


\begin{proof}
Since $0 < r \le \delta$, we can apply Lemma~\ref{t1}, Lemma~\ref{l4.2}, and Lemma~\ref{l2.2} to deduce that
\begin{align}
\tilde{\mu}(z) &\ge \int_{\Delta(z,r)} |k_z(\zeta)|^2 \,d\mu(\zeta) \simeq \int_{\Delta(z,r)} K(\zeta,\zeta) \,d\mu(\zeta) \nonumber \\
&\simeq \int_{\Delta(z,r)} \frac{1}{u(\Delta(\zeta,r))} \,d\mu(\zeta) \simeq \frac{1}{u(\Delta(z,r))} \int_{\Delta(z,r)} \,d\mu(\zeta) = \widehat{\mu_r}(z). \label{e7}
\end{align}

Conversely, by Lemma \ref{l1.1}, we have
\[
|k_z(w)|^2 \lesssim \frac{1}{u(\Delta(w,r))} \int_{\Delta(w,r)} |k_z(\zeta)|^2 u(\zeta) \,dA(\zeta),
\]
where $z,w \in \D$ and $r > 0$. Choosing the sequence $\{a_j\}$ and the radius $r > 0$ as specified in Lemma~\ref{l2.6}, and invoking Lemma~\ref{l2.2}, Fubini's theorem, and Lemma~\ref{l2.6}, we obtain the following chain of inequalities:
\begin{align*}
\tilde{\mu}(z) &\le \sum_{j=1}^\infty \int_{\Delta(a_j,r)} |k_z(w)|^2 \,d\mu(w) \\
&\lesssim \sum_{j=1}^\infty \int_{\Delta(a_j,r)} \frac{1}{u(\Delta(w,r))} \int_{\Delta(w,r)} |k_z(\zeta)|^2 u(\zeta) \,dA(\zeta) \,d\mu(w) \\
&\lesssim \left( \sup_{j} \frac{\mu(\Delta(a_j,r))}{u(\Delta(a_j,r))} \right) \sum_{j=1}^\infty \int_{\Delta(a_j,2r)} |k_z(\zeta)|^2 u(\zeta) \,dA(\zeta) \\
&\lesssim \sup_{z\in\D} \widehat{\mu_r}(z),
\end{align*}
where the final inequality follows directly from Lemma~\ref{l2.6}. This completes the proof.
\end{proof}

\begin{lem}\label{tr}
Let $\mu$ be a positive Borel measure on $\mathbb{D}$ and fix $r > 0$. Suppose that $\mu$ satisfies
\begin{equation}\label{con2}
\int_\D |K(\xi,z)|^p \,d\mu(\xi) < \infty,
\end{equation}
for some $0 < p < \infty$. Then the following assertions are equivalent:
\begin{enumerate}[label=(\roman*)]
    \item $\widetilde{\mu_t} \in L^p$ for any $t > 0$;
    \item $\widehat{\mu_r} \in L^p$.
\end{enumerate}
Furthermore, under these conditions, we have
\begin{equation*}
\tilde{\mu}_t(z) \simeq \widehat{\mu_r}(z).
\end{equation*}
\end{lem}

\begin{proof}
(i) $\Rightarrow$ (ii): By Lemma~\ref{t1}, Lemma~\ref{kep}, and Lemma~\ref{l2.2}, we observe that
\begin{align*}
\tilde{\mu}_t(z) &\ge \int_{\Delta(z,r)} |k_{t,z}(w)|^t \,d\mu(w) \\
&= \int_{\Delta(z,r)} \left| \frac{k_z(w)\|K_z\|_{A_u^2}}{\|K_z\|_{A_u^t}} \right|^t \,d\mu(w) \\
&\simeq u(\Delta (z,r))^{\frac{t}{2}-1} \int_{\Delta(z,r)} \left| K(w,w) \right|^{\frac{t}{2}} \,d\mu(w) \simeq \widehat{\mu_r}(z).
\end{align*}	

(ii) $\Rightarrow$ (i): Let the sequence $\{a_j\}$ and the radius $r > 0$ be chosen as in Lemma~\ref{l2.6}. Utilizing Lemma~\ref{l2.2} and Fubini's theorem, we compute:
\begin{align*}
\tilde{\mu}_t(z) &\le \sum_{j=1}^\infty \int_{\Delta(a_j,r)} |k_{t,z}(w)|^t \,d\mu(w) \\
&\lesssim \sum_{j=1}^\infty \int_{\Delta(a_j,r)} \frac{1}{u(\Delta(w,r))} \int_{\Delta(w,r)} |k_{t,z}(\zeta)|^t u(\zeta) \,dA(\zeta) \,d\mu(w) \\
&\lesssim \left( \sup_{j} \frac{\mu(\Delta(a_j,r))}{u(\Delta(a_j,r))} \right) \sum_{j=1}^\infty \int_{\Delta(a_j,2r)} |k_{t,z}(\zeta)|^t u(\zeta) \,dA(\zeta) \\
&\lesssim \left( \sup_{z\in\D} \widehat{\mu_r}(z) \right) \|k_{t,z}\|^t_{A^t_u} = \sup_{z\in\D} \widehat{\mu_r}(z).
\end{align*}
\end{proof}

The following corollary is an immediate consequence of Lemma~\ref{ba1} and Lemma~\ref{tr}.

\begin{cor}\label{ba2}
Let $\mu$ be a positive Borel measure on $\mathbb{D}$, $0 < p < \infty$, and $r > 0$. If $\mu$ satisfies condition \eqref{con2}, then the following assertions are equivalent:
\begin{enumerate}[label=(\roman*)]
    \item $\widetilde{\mu_t} \in L^p$ for any $t > 0$;
    \item $\widetilde{\mu} \in L^p$.
\end{enumerate}
Furthermore, we have
\begin{equation*}
\tilde{\mu}(z) \simeq \tilde{\mu}_t(z) \simeq \widehat{\mu_r}(z).
\end{equation*}
\end{cor}

\subsubsection{Duality}

In this subsection, we establish a duality relation for weights belonging to the $B_{p_0}$ class. As is standard, we denote by $p^\prime$ the conjugate exponent of $p > 1$, defined by the relation $\frac{1}{p} + \frac{1}{p^\prime} = 1$.

\begin{lem}\label{du}
Suppose $p_0 > 1$ and $u \in B_{p_0}$. For $1 < p < \infty$, the dual space of $A_u^p$ can be identified with $A_u^{p^\prime}$ via the pairing
\begin{equation}
\langle f,g\rangle_{u} = \int_\D f(z)\overline{g(z)} u(z) \,dA(z).
\end{equation}
\end{lem}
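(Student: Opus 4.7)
The plan is to prove the duality in two directions. For the easy direction $A_u^{p'}\hookrightarrow (A_u^p)^*$, H\"older's inequality applied to the pairing gives
\[|\langle f, g\rangle_{A^2(u)}| = \left|\int_\D f\bar g\,u\,\d A\right| \leq \|f\|_{A_u^p}\|g\|_{A_u^{p'}},\]
so every $g\in A_u^{p'}$ induces a bounded functional $\Phi_g$ on $A_u^p$ with $\|\Phi_g\|\leq\|g\|_{A_u^{p'}}$. Injectivity of $g\mapsto\Phi_g$ follows by testing against reproducing kernels, which belong to $A_u^p$ by Lemma \ref{kep}: the identity $g(z)=\langle K_z,g\rangle_{A^2(u)}=\Phi_g(K_z)$ forces $g\equiv 0$ whenever $\Phi_g=0$.

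For the converse, given $\Phi\in (A_u^p)^*$, I would extend $\Phi$ to $\tilde\Phi\in (L^p(u))^*$ via Hahn--Banach without increasing the norm. Using the standard identification $(L^p(u\,\d A))^*\cong L^{p'}(u\,\d A)$ under the pairing $(F,H)\mapsto\int F\bar H u\,\d A$, there is $h\in L^{p'}(u)$ with $\|h\|_{L^{p'}(u)}=\|\Phi\|$ and $\Phi(f)=\int_\D f\bar h u\,\d A$ for every $f\in A_u^p$. To exhibit an analytic representative, I would project $h$ onto $A^2(u)$ by the weighted Bergman projection associated with the kernel of $A^2(u)$,
\[g(z):=\int_\D h(w)K(z,w)u(w)\,\d A(w),\]
which is automatically analytic in $z$ since $K(\cdot,w)$ is. A Fubini computation together with the reproducing formula $f(w)=\int f(z)K(w,z)u(z)\,\d A(z)$, valid on $A^2(u)$, then yields $\langle f,g\rangle_{A^2(u)}=\Phi(f)$ on the dense subspace $A_u^p\cap A^2(u)$, and continuity extends the identity to the whole of $A_u^p$.

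The main obstacle is to show $g\in A_u^{p'}$ with $\|g\|_{A_u^{p'}}\lesssim\|h\|_{L^{p'}(u)}$; equivalently, that the projection $P_u$ defined above is bounded on $L^{p'}(u)$. Because the hypothesis $u\in B_{p_0}$ only directly produces boundedness of the \emph{unweighted} Bergman projection on $L^{p_0}(u)$, this step is nontrivial. I would approach it via a Schur test, using test functions constructed from the sharp kernel norm estimate $\|K_w\|_{A_u^p}\simeq u(\Delta(w,r))^{1/p}/(1-|w|)^2$ of Lemma \ref{kep}, the local averaging inequality of Lemma \ref{l1.1}, the comparability of weighted areas under the pseudohyperbolic metric (Lemma \ref{l2.2}), and the covering argument of Lemma \ref{l2.6} --- precisely the B\'{e}koll\'{e}--Bonami machinery gathered in the preliminaries. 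This projection bound is the heart of the argument, and the only place where the structure of the weight $u$ genuinely enters.
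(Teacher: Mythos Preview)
Your approach is essentially the same as the paper's: H\"older for the easy direction, then Hahn--Banach extension to $L^p(u)$, $L^p$-duality to obtain a representing $h$, and the weighted Bergman projection $P_u$ (with kernel $K$) to produce the analytic representative $g=P_u h$. The one difference is that the paper simply \emph{asserts} ``the Bergman projection $P_u:L_u^p\to A_u^p$ is bounded'' as a known fact without proof or citation, whereas you correctly isolate this as the crux and sketch a Schur-test route; in that respect your write-up is more careful than the paper's own argument, which also omits the injectivity check you include.
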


\begin{proof}
For any $f \in A_u^{p^\prime}$ and $g \in A_u^p$, let $\Lambda_g(f) = \langle f,g\rangle_{u}$. Applying H\"older's inequality, we obtain
\begin{equation*}
|\Lambda_g(f)| = |\langle f,g\rangle_{u}| \le \int_{\mathbb{D}} |f(z)||g(z)| u(z) \,dA(z) \le \|f\|_{A_u^{p^\prime}} \|g\|_{A_u^p}.
\end{equation*}
Thus, $\Lambda_g \in (A_u^{p^\prime})^*$ with $\|\Lambda_g\| \le \|g\|_{A_u^p}$.

Conversely, by the Hahn–Banach theorem, any bounded linear functional $T \in (A_u^{p^\prime})^*$ extends to a bounded linear functional $\widetilde{T}$ on $L_u^{p^\prime}$. Since the dual space $(L_u^{p^\prime})^*$ is isometrically isomorphic to $L_u^p$, there exists a function $h \in L_u^p$ such that $T(f) = \langle f,h \rangle_{u}$ for all $f \in A_u^{p^\prime}$, with $\|\widetilde T\| = \|h\|_{L_u^p}$. 

Since the weighted Bergman projection $P_u \colon L^p_u \to A^p_u$ is bounded (Theorem~\ref{lemProj}), an application of Fubini's theorem yields
\begin{equation*}
T(f) = T(P_u f) = \langle P_u f, h \rangle_{u} = \langle f, P_u h \rangle_{u} = \Lambda_g(f),
\end{equation*}
where $g = P_u h$. It follows that $g \in A_u^p$ and satisfies $\|g\|_{A_u^p} \le \|P_u\| \|h\|_{L_u^p} = \|P_u\| \|T\|$, which completes the proof.
\end{proof}

We next present a result that is essential for characterizing the compactness properties of operators acting on B\'{e}koll\'{e}–Bonami Bergman spaces. Chac\'{o}n \cite{Ch} established a version for the normalized kernel $k_{2,w}$; we outline it below for completeness.

\begin{lem}[Proposition 19 in \cite{Ch}]\label{lch}
Let $p_0 > 1$. If $u \in B_{p_0}$, then the normalized kernel function $k_{w}$ converges weakly to zero in $A^2_u$ as $|w| \to 1^-$.
\end{lem}

We now extend Lemma~\ref{lch} to the general $p$-case.

\begin{lem}\label{p1}
Let $p_0 > 1$ and $1 < p < \infty$. If $u \in B_{p_0}$, then the normalized kernel function $k_{p,w}$ converges weakly to zero as $|w| \to 1^-$.
\end{lem}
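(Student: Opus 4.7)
The plan is to use the duality of Lemma~\ref{du} together with the reproducing property to reduce the weak convergence of $k_{p,z}$ to a scalar estimate, and then to control that estimate using the point-evaluation inequality of Lemma~\ref{l1.1} and the kernel-norm identity of Lemma~\ref{kep}. Since $\|k_{p,z}\|_{A^p_u}=1$ by construction, the family $\{k_{p,z}\}$ is already norm-bounded in $A^p_u$, and for $p>1$ it remains only to check that $\langle k_{p,z},g\rangle_{A^2(u)}\to 0$ for every $g\in A^{p'}_u$.

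The key step is to compute the pairing explicitly. Using $\overline{K(z,w)}=K(w,z)$ and the reproducing property in $A^2(u)$,
\[
\langle k_{p,z},g\rangle_{A^2(u)}=\frac{1}{\|K_z\|_{A^p_u}}\int_\D K(w,z)\overline{g(w)}u(w)\,\d A(w)=\frac{\overline{g(z)}}{\|K_z\|_{A^p_u}},
\]
so the task reduces to showing $|g(z)|/\|K_z\|_{A^p_u}\to 0$ as $|z|\to 1^-$ for each fixed $g\in A^{p'}_u$. Applying Lemma~\ref{l1.1} with exponent $p'$ to $g$,
\[
|g(z)|^{p'}\lesssim u(\Delta(z,r))^{-1}\int_{\Delta(z,r)}|g(w)|^{p'}u(w)\,\d A(w),
\]
and observing that $\Delta(z,r)$ eventually leaves every compact subset of $\D$ as $|z|\to 1^-$ (because the pseudohyperbolic distance from any fixed interior point to $z$ tends to $1$), dominated convergence forces $\epsilon_z:=\int_{\Delta(z,r)}|g|^{p'}u\,\d A\to 0$. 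Combining this bound with Lemma~\ref{kep} and invoking $1/p+1/p'=1$, the powers of $u(\Delta(z,r))$ cancel, and one is left with $|g(z)|/\|K_z\|_{A^p_u}\lesssim \epsilon_z^{1/p'}\to 0$.

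The chief subtlety is precisely this weight bookkeeping: the exponent $-1/p'$ of $u(\Delta(z,r))$ produced by Lemma~\ref{l1.1} must combine cleanly with the exponents arising in $\|K_z\|_{A^p_u}$ via Lemma~\ref{kep}, so that the entire $z$-dependence is absorbed into the vanishing factor $\epsilon_z^{1/p'}$; this is where the B\'{e}koll\'{e}-Bonami hypothesis (through the $C_{p_0}$ comparisons in Lemmas~\ref{l2.2} and~\ref{l4.2}) does its work. For the quasi-Banach range $0<p\le 1$, where Lemma~\ref{du} is not available, I would instead use the analogous point-evaluation estimate applied to $K_z\in A^p_u$ at a fixed $w\in\D$ to get pointwise convergence $k_{p,z}(w)\to 0$; together with the norm bound $\|k_{p,z}\|_{A^p_u}=1$ this is sufficient for the compactness arguments for $T_\mu$ in which this lemma is ultimately invoked.
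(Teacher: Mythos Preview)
Your approach mirrors the paper's: reduce via duality (Lemma~\ref{du}) and the reproducing property to the scalar quantity $|g(z)|/\|K_z\|_{A^p_u}$, then invoke the kernel estimate of Lemma~\ref{kep}. You add the useful detail of controlling $|g(z)|$ through Lemma~\ref{l1.1} and the vanishing tail $\epsilon_z=\int_{\Delta(z,r)}|g|^{p'}u\,\d A$, whereas the paper simply asserts that the resulting expression tends to zero.

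One correction to your bookkeeping: with Lemma~\ref{kep} as stated one gets
\[
\frac{|g(z)|}{\|K_z\|_{A^p_u}}\;\lesssim\;\frac{u(\Delta(z,r))^{-1/p'}\epsilon_z^{1/p'}}{u(\Delta(z,r))^{1/p}/(1-|z|)^{2}}
=\frac{(1-|z|)^{2}}{u(\Delta(z,r))}\,\epsilon_z^{1/p'},
\]
so the $u(\Delta(z,r))$--powers do \emph{not} cancel after invoking $1/p+1/p'=1$; an extra factor $(1-|z|)^{2}/u(\Delta(z,r))$ remains. The repair is immediate within the paper's framework: combining Lemma~\ref{kep} at $p=2$ with Lemma~\ref{l4.2} gives $K(z,z)=\|K_z\|_{A^2_u}^{2}\simeq u(\Delta(z,r))/(1-|z|)^{4}$ and $K(z,z)\simeq u(\Delta(z,r))^{-1}$, hence $(1-|z|)^{2}\simeq u(\Delta(z,r))$, so the residual factor is bounded and your conclusion $\lesssim\epsilon_z^{1/p'}\to0$ stands. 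Your treatment of $0<p\le1$ via pointwise convergence plus the uniform bound $\|k_{p,z}\|_{A^p_u}=1$ is in the same spirit as the paper's reduction through the scalar relation $k_{p,z}=(\|K_z\|_{A^2_u}/\|K_z\|_{A^p_u})\,k_z$.
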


\begin{proof}
By Lemma~\ref{du}, for $1 < p < \infty$, it suffices to demonstrate that
\begin{equation*}
|\langle k_{p,w},g\rangle_{u}| \to 0 \quad \text{as } |w| \to 1^-,
\end{equation*}
for every $g \in A_u^{p^\prime}$, where $\frac{1}{p} + \frac{1}{p^\prime} = 1$. By the definition of the normalized kernel and the reproducing property, we have
\begin{align*}
|\langle k_{p,w},g\rangle_{u}| &= \frac{1}{\|K_w\|_{A_u^p}} \left| \int_\D K(z,w) g(z) u(z) \,dA(z) \right| \\
&= \frac{|g(w)|}{\|K_w\|_{A_u^p}}.
\end{align*}
Applying the kernel norm estimate from Theorem~\ref{2.17} along with Lemma~\ref{l1.1}, we see that
\begin{align*}
|\langle k_{p,w},g\rangle_{u}| &\simeq \frac{|g(w)|}{u(\Delta(w,r))^{\frac{1-p}{p}}} \\
&\lesssim \frac{1}{u(\Delta(w,r))^{-\frac{1}{p^\prime}}} \left( u(\Delta(w,r))^{-1} \int_{\Delta(w,r)} |g(z)|^{p^\prime} u(z) \,dA(z) \right)^{\frac{1}{p^\prime}} \\
&\lesssim \left( \int_{\Delta(w,r)} |g(z)|^{p^\prime} u(z) \,dA(z) \right)^{\frac{1}{p^\prime}}.
\end{align*}
Since $g \in A^{p^\prime}_u$, it follows that $|g(z)|^{p^\prime} u \in L^1(\mathbb{D})$. Thus, for any $\varepsilon > 0$, there exists a compact subset $E \subset \D$ such that
\begin{equation*}
\int_{\D \setminus E} |g(z)|^{p^\prime} u(z) \,dA(z) < \varepsilon.
\end{equation*}
Since the pseudohyperbolic disk $\Delta(w, r)$ eventually becomes disjoint from any fixed compact subset $E \subset \mathbb{D}$ as $|w| \to 1^-$, the inclusion $\Delta(w, r) \subset \mathbb{D} \setminus E$ holds for $|w|$ sufficiently close to $1$. Therefore, the integral over $\Delta(w, r)$ approaches zero, completing the proof of weak convergence.
\end{proof}


The following proposition is a classical one, and its proof is analogous to the verification provided by K. Zhu in \cite[Theorem 1.3.4]{zhu1}.
\begin{pro}\label{p2}
Let $0 < p < \infty$, $p_0 > 1$, and $u \in B_{p_0}$. A linear operator $T$ on $A^p(u)$ is compact if and only if $\|T f_n\|_{A^p(u)} \longrightarrow 0$ whenever $f_n \to 0$ weakly in $A^p(u)$ if and only if  $\|T f_n\|_{A^p(u)}
  \longrightarrow 0,$ for any bounded $\{f_n\}\in A^p(u)$ and  converges to zero uniformly in compact subsets of $\D.$
\end{pro}
\begin{proof}
Suppose first that $T$ is compact and $f_n \to 0$ weakly in $A^p(u)$. We claim $\|T f_n\|_{A^p(u)} \to 0$. Suppose the contrary, passing to a subsequence allows us to assume that that there exists some $\varepsilon > 0$ such that $\|T f_n\|_{A^p(u)} \geq \varepsilon$ for all $n \geq 1$. By the uniform boundedness principle, the weak convergence of $\{f_n\}$ implies that the sequence of norms $\{\|f_n\|_{A^p(u)}\}$ remains bounded. 

Using the compactness of $T$, there exists a subsequence $\{f_{n_k}\}$ such that $\{Tf_{n_k}\}$ converges to a limit function $g$ in the norm topology of $A^p(u)$. Because any compact operator in a Banach space is weakly continuous, the assertion $f_{n_k} \to 0$ weakly implies that $Tf_{n_k} \to 0$ weakly as well. By the uniqueness of weak limits, we deduce that $g = 0$, which yields $\|T f_{n_k}\|_{A^p(u)} \to 0$. This directly contradicts our initial lower-bound assumption.

Conversely, assume that $T$ is not compact. Then there exists a bounded sequence $\{g_n\}$ in $A^p(u)$ such that $\{Tg_n\}$ admits no norm-convergent subsequence in $A^p(u)$. That is, $\|Tg_n - Tg_m\|_{A^p(u)} \geq \varepsilon$ for some fixed $\varepsilon > 0$ and all $m \neq n$. By the Banach--Alaoglu theorem, every bounded sequence in a reflexive Bergman space contains a weakly convergent subsequence; thus, we may assume without loss of generality that $g_n \to g$ weakly in $A^p(u)$ for some element $g \in A^p(u)$. 

Define the translated sequence $f_n := g_n - g$. It follows that $f_n \in A^p(u)$ and $f_n \to 0$ weakly in $A^p(u)$. By the linearity of $T$, we have $Tf_n = Tg_n - Tg$. If we suppose for contradiction that $\|Tf_n\|_{A^p(u)} \to 0$, then for sufficiently large indices $m$ and $n$, the triangle inequality ensures:
\[
\|Tg_n - Tg_m\|_{A^p(u)} \leq \|Tg_n - Tg\|_{A^p(u)} + \|Tg - Tg_m\|_{A^p(u)} < \varepsilon,
\]
which contradicts the structural non-compactness property established above. Consequently, $\|Tf_n\|_{A^p(u)} \not\to 0$. Thus, if $T$ is not compact, there exists a sequence weakly tracking to zero whose images fail to converge in norm, completing the verification of the contrapositive.
\end{proof}

\section {Proof of Theorem \ref{tb}}
\begin{proof}
The proof established by the cycle of implications $\text{(i)} \implies \text{(ii)} \implies \text{(iii)} \implies \text{(iv)} \implies \text{(i)}$.

\subsection*{ (i) $\implies$ (ii)}
Assume $T_\mu: A^p_u(\D) \to A^q_u(\D)$ is bounded. We test the operator against the $p$-normalized reproducing kernel defined by 
\[ k_{p,z}(w) = \frac{K_z(w)}{\|K_z\|_{A^p_u}}. \]
Under conditions $C_1$ and $C_2$, Theorem~\ref{2.17} implies that for all $p > 0$, the norm satisfies $\|K_z\|_{A^p_u} \simeq u(\Delta(z,r))^{\frac{1-p}{p}}$.

\medskip
\noindent\textbf{Case 1: $q > 1$.} Since $q > 1$, the conjugate exponent $q'$ exists. By testing against the $q'$-normalized kernel $k_{q',z}$ and using duality:
\[ \left| \int_{\D} k_{p,z}(w) \overline{k_{q',z}(w)} \, d\mu(w) \right| \le \|T_\mu k_z\|_{A^q_u} \|k_{q',z}\|_{A^{q'}_u} \le \|T_\mu\| \|k_z\|_{A^p_u} \|k_{q',z} \|_{A^{q'}_u} = \|T_\mu\|. \]
Then, immediately we have 
\[ u(\Delta(z,r))^{-\frac{1}{p} + \frac{1}{q}} \tilde{\mu}(z) \lesssim \|T_\mu\| \implies \frac{\tilde{\mu}(z)}{u(\Delta(z,r))^{\frac{1}{p}-\frac{1}{q}}} \lesssim \|T_\mu\|. \]
Applying Corollary~\ref{ba2} yields the equivalence for the generalized $\widetilde{\mu}_t(z)$ variant.

\noindent\textbf{Case 2: $q \le 1$.} When $q \le 1$,  we use the local subharmonic property (Lemma~\ref{l1.1}) directly on the image function $T_\mu k_{p,z}$ over $\Delta(z,r)$:
\[ |T_\mu k_{p,z}(z)|^q \lesssim \frac{1}{u(\Delta(z,r))} \int_{\Delta(z,r)} |T_\mu k_{p,z}(\zeta)|^q u(\zeta) \, dA(\zeta) \le \frac{\|T_\mu k_{p,z}\|_{A^q_u}^q}{u(\Delta(z,r))} \le \frac{\|T_\mu\|^q}{u(\Delta(z,r))}. \]
Taking the $q$-th root gives $|T_\mu k_{p,z}(z)| \lesssim \|T_\mu\| u(\Delta(z,r))^{-1/q}$. Expanding the integral definition of $T_\mu k_{p,z}(z)$:
\[ |T_\mu k_z(z)| = \frac{1}{\|K_z\|_{A^p_u}} \int_{\D} |K_z(w)|^2 \, d\mu(w) \simeq u(\Delta(z,r))^{-\frac{1}{p}} \tilde{\mu}(z). \]
This leads to 
\[ u(\Delta(z,r))^{-\frac{1}{p}} \tilde{\mu}(z) \lesssim \|T_\mu\| u(\Delta(z,r))^{-\frac{1}{q}} \implies \frac{\tilde{\mu}(z)}{u(\Delta(z,r))^{\frac{1}{p}-\frac{1}{q}}} \lesssim \|T_\mu\|. \]

\subsection*{ (ii) $\implies$ (iii)}
By Lemma~\ref{tr}, we get the following.
\[ \widetilde{\mu}_t(z) \simeq \widehat{\mu}_r(z), \quad \forall z \in \D. \]
This direct equivalence converts condition (ii) into condition (iii).

\subsection*{ (iii) $\implies$ (iv)}
From statement (iii), the localized quotient inequality means:
\[ \frac{\mu(\Delta(z,r))}{u(\Delta(z,r))} \lesssim u(\Delta(z,r))^{\frac{1}{p}-\frac{1}{q}} \implies \mu(\Delta(z,r)) \lesssim u(\Delta(z,r))^{1 + \frac{1}{p} - \frac{1}{q}}. \]
By Lemma~\ref{Cm}, this geometric power scaling represents the exact structural condition for $\mu$ being a valid Carleson-type embedding measure.

\subsection*{ (iv) $\implies$ (i)}
We verify that the geometric condition bounds the operator $T_\mu$.

\medskip
\noindent\textbf{Case 1: $q > 1$.} Under $q > 1$, we deploy the pairing projection duality $(A^q_u)^* \cong A^{q'}_u$. For any $f \in A^p_u$ and $g \in A^{q'}_u$:
\[ |\langle T_\mu f, g \rangle_{A^2_u}| \le \int_{\D} |f(w)g(w)| \, d\mu(w) \lesssim \|f\|_{A^p_u} \|g\|_{A^{q'}_u}. \]
Taking the supremum over all $\|g\|_{A^{q'}_u} = 1$ establishes $\|T_\mu f\|_{A^q_u} \lesssim \|f\|_{A^p_u}$.

\medskip
\noindent\textbf{Case 2: $p \le q \le 1$.}  We  use the $r$-lattice sequence $\{z_k\}$ from Lemma~\ref{l2.6} and Apply the power inequality $(\sum a_k)^q \le \sum a_k^q$ to get
\[ |T_\mu f(z)|^q \le \sum_{k=1}^\infty \left( \int_{\Delta(z_k, r)} |f(w) K(z,w)| \, d\mu(w) \right)^q. \]
 Pull out the suprema give
\[  |T_\mu f(z)|^q\lesssim \sum_{k=1}^\infty \sup_{w \in \Delta(z_k, r)} |f(w)|^q \sup_{w \in \Delta(z_k, r)} |K(z,w)|^q \cdot \mu(\Delta(z_k, r))^q. \]
From condition (iv), the measure satifies $\mu(\Delta(z_k, r))^q \lesssim u(\Delta(z_k, r))^{q + \frac{q}{p} - 1}$. By Lemmas~\ref{l1.1} and \ref{l2.2}, we obtain 
\[ \sup_{w \in \Delta(z_k, r)} |f(w)|^q \lesssim \left( \frac{1}{u(\Delta(z_k, r))} \int_{\Delta(z_k, r)} |f(\xi)|^p u(\xi) \, dA(\xi) \right)^{\frac{q}{p}}, \] and 
\[ \sup_{w \in \Delta(z_k, r)} |K(z,w)|^q \lesssim \frac{1}{u(\Delta(z_k, r))} \int_{\Delta(z_k, r)} |K(z,\xi)|^q u(\xi) \, dA(\xi). \]
Combining the exponents of $u(\Delta(z_k, r))$ inside the summation:
\[ \left(-\frac{q}{p}\right) + (-1) + \left(q + \frac{q}{p} - 1\right) = q - 2. \]
This yields the clean discrete arrangement:
\[ |T_\mu f(z)|^q \lesssim \sum_{k=1}^\infty u(\Delta(z_k, r))^{q-2} \left( \int_{\Delta(z_k, r)} |f(\xi)|^p u(\xi) \, dA(\xi) \right)^{\frac{q}{p}} \left( \int_{\Delta(z_k, r)} |K(z,\xi)|^q u(\xi) \, dA(\xi) \right). \]
We integrate both sides against $u(z)dA(z)$ over $\D$ to find $\|T_\mu f\|_{A^q_u}^q$. By Fubini's theorem, Theorem~\ref{2.17} and Lemma~\ref{l2.6}, we have   
\[ \|T_\mu f\|_{A^q_u}^q \lesssim \sum_{k=1}^\infty \left( \int_{\Delta(z_k, r)} |f(\xi)|^p u(\xi) \, dA(\xi) \right)^{\frac{q}{p}} \le \left( \sum_{k=1}^\infty \int_{\Delta(z_k, r)} |f(\xi)|^p u(\xi) \, dA(\xi) \right)^{\frac{q}{p}}\lesssim \|f\|_{A^p_u}^q. \]
 This completes the proof.
\end{proof}

\section{Proof of Theorem \ref{tc}}
\begin{proof}
The proof follows the structural cycle $\text{(i)} \implies \text{(ii)} \implies \text{(iii)} \implies \text{(iv)} \implies \text{(i)}$.

\subsection*{ (i) $\implies$ (ii)}
Assume $T_\mu: A^p_u(\D) \to A^q_u(\D)$ is a compact operator.  Consider the $p$-normalized reproducing kernel sequence:
\[ k_{p,z}(w) = \frac{K_{z}(w)}{\|K_{z}\|_{A^p_u}}. \]
By Lemma~\ref{p1}, $k_{p,z}$ converges to zero weakly in $A^p_u(\D)$ as $|z| \to 1^-$. Since $T_\mu$ is compact,  $\|T_\mu k_{p,z}\|_{A^q_u} \longrightarrow 0$ as $ |z|\to 1^-$.
Now, we have to discuss two cases.

\noindent\textbf{Case 1: $q > 1$.} By duality , we get  
\[ \left| \int_{\D} k_{p,z}(w) \overline{k_{q',z}(w)} \, d\mu(w) \right| \le \|T_\mu k_{p,z}\|_{A^q_u} \|k_{q',z}\|_{A^{q'}_u} = \|T_\mu k_{p,z}\|_{A^q_u}. \]
By some same arguments as in the proof of Theorem~\ref{tb} (i) $\implies$ (ii),
\[ \frac{\tilde{\mu}(z)}{u(\Delta(z,r))^{\frac{1}{p}-\frac{1}{q}}} \lesssim \|T_\mu k_{p,z}\|_{A^q_u}, \]
 which proves (ii) for $q>1$. 
 
\noindent\textbf{Case 2: $q \le 1$.} By Lemma~\ref{l1.1} and our assumption, 
\begin{align}\label{te1}
 |T_\mu k_{p,z}(z)|^q \lesssim \frac{1}{u(\Delta(z,r))} \int_{\Delta(z,r)} |T_\mu k_{p,z}(\zeta)|^q u(\zeta) \, dA(\zeta) \le \frac{\|T_\mu k_{p,z}\|_{A^q_u}^q}{u(\Delta(z,r))} . 
 \end{align}
 By Theorem~\ref{2.17}, we have
\[ |T_\mu k_{p,z}(z)| = \frac{1}{\|K_{z}\|_{A^p_u}} \int_{\D} |K_{z}(w)|^2 \, d\mu(w) \simeq u(\Delta(z,r))^{-\frac{1}{p}} \tilde{\mu}(z). \]
This together with \eqref{te1}, we obtain
\[ \frac{\tilde{\mu}(z)}{u(\Delta(z,r))^{\frac{1}{p}-\frac{1}{q}}} \lesssim \|T_\mu k_{p,z}\|_{A^q_u}. \]
This completes the proof for the case $q\leq 1.$
\subsection*{ (ii) $\implies$ (iii)}
By Lemma~\ref{tr}, under conditions $C_1$ and $C_2$,  we have
\[ \widetilde{\mu_t}(z) \simeq \widehat{\mu_r}(z), \quad \forall z \in \D. \]
Thus, the statement (ii) translates directly into statement (iii).

\subsection*{(iii) $\implies$ (iv)}
Suppose that  statement (iii) hols, we have 
\[ \frac{\mu(\Delta(z,r))}{u(\Delta(z,r))^{\gamma}}=\frac{\widehat{\mu_r(z)}}{u(\Delta(z,r))^{\frac{1}{p}-\frac{1}{q}}}.
 \]
Taking the limit as $|z| \to 1^-$ forces the vanishing Carleson measur of statement (iv).

\subsection*{(iv) $\implies$ (i)}
To prove compactness, we show that if $\mu$ is a vanishing Carleson measure, $T_\mu$ can be approximated in the operator norm topology by a sequence of compact finite-rank operators. For any $\epsilon \in (0,1)$, let $\D_\epsilon = \{z \in \D : |z| \le \epsilon\}$. We decompose the measure into $\mu = \mu_\epsilon + \mu^\epsilon$, where $\mu_\epsilon = \mu|_{\D_\epsilon}$ and $\mu^\epsilon = \mu|_{\D \setminus \D_\epsilon}$. 
Since $\D_\epsilon$ is a compact subset, so  $T_{\mu_\epsilon}$ is compact. We analyze the remaining  operator $T_{\mu^\epsilon}$.

\noindent\textbf{Case1: $q > 1$.} By Theorem~\ref{tb}, the operator norm satisfies:
\[ \|T_{\mu^\epsilon}\| \lesssim \sup_{z \in \D \setminus \D_\epsilon} \frac{\mu^\epsilon(\Delta(z,r))}{u(\Delta(z,r))^{1 + \frac{1}{p} - \frac{1}{q}}}. \]
By statement (iv), taking the limit as $\epsilon \to 1^-$ causes this supremum to converge to $0$. Thus, $\|T_\mu - T_{\mu_\epsilon}\| = \|T_{\mu^\epsilon}\| \to 0$ as $\epsilon \to 1^-$, proving that $T_\mu$ is a compact operator.

\medskip
\noindent\textbf{Case 2: $p \le q \le 1$.} Let  $\{z_k\}$ be the lattice defined in Lemma~\ref{l2.6}.  For any $f \in A^p_u(\D)$,
\begin{align*}
 |T_{\mu^\epsilon} f(z)|^q\le \sum_{z_k \in \D \setminus \D_\epsilon} \sup_{w \in \Delta(z_k, r)} |f(w)|^q \sup_{w \in \Delta(z_k, r)} |K(z,w)|^q \cdot \mu(\Delta(z_k, r))^q. 
 \end{align*}
Applying the $q$- vanishing Carleson boundary bound $\mu(\Delta(z_k, r))^q \le \epsilon_k^q u(\Delta(z_k, r))^{q + \frac{q}{p} - 1}$ where $\epsilon_k \to 0$ as $|z_k| \to 1^-$, and using Lemma~\ref{l1.1} yields to 
\begin{align}
 |T_{\mu^\epsilon} f(z)|^q \le \sum_{z_k \in \D \setminus \D_\epsilon} \epsilon_k^q u(\Delta(z_k, r))^{q-2} &\left( \int_{\Delta(z_k, r)} |f(\xi)|^p u(\xi) \, dA(\xi) \right)^{\frac{q}{p}}\\
 & \times \left( \int_{\Delta(z_k, r)} |K(z,\xi)|^q u(\xi) \, dA(\xi) \right). 
 \end{align}
 By integrating with respect to  $u(z)dA(z)$ over $\D$, and  applying Theorem~\ref{2.17}, we get 
\[ \|T_{\mu^\epsilon} f\|_{A^q_u}^q \lesssim \sup_{j:z_j \in \D \setminus \D_\epsilon} \epsilon_j^q \sum_{z_k \in \D \setminus \D_\epsilon} \left( \int_{\Delta(z_k, r)} |f(\xi)|^p u(\xi) \, dA(\xi) \right)^{\frac{q}{p}}. \]
Using the vector space embedding sequence $\ell^s \subset \ell^1$ enabled by $s=\frac{q}{p} \ge 1$, we have
\[  \|T_{\mu^\epsilon} f\|_{A^q_u}^q\le \sup_{j:z_j \in \D \setminus \D_\epsilon} \epsilon_j^q  \left( \sum_{z_k \in \D \setminus \D_\epsilon} \int_{\Delta(z_k, r)} |f(\xi)|^p u(\xi) \, dA(\xi) \right)^{\frac{q}{p}} \lesssim \left(\sup_{j:z_j \in \D \setminus \D_\epsilon} \epsilon_j^q  \right) \|f\|_{A^p_u}^q. \]
Taking the $q$-th root implies that the operator norm satisfies the following:
\[ \|T_\mu - T_{\mu_\epsilon}\| = \|T_{\mu^\epsilon}\| \lesssim \sup_{j:z_j \in \D \setminus \D_\epsilon}  \epsilon_j \to 0 \quad \text{as } \epsilon \to 1^-. \]
Since the space of compact operators is closed under the operator norm topology, the limit operator $T_\mu$ is compact, and this completes the proof.
\end{proof}

\section{Proof of Theorem \ref{qlp}}

\begin{proof}
The implications $\text{(i)} \implies \text{(ii)}$ and $\text{(v)} \iff \text{(vi)}$ are obvious due to the geometric range $0 < q < p < \infty$. The equivalence $\text{(iii)} \iff \text{(iv)}$ is a direct consequence of Lemma~\ref{tr} ($\widetilde{\mu}_t(z) \simeq \widehat{\mu}_r(z)$). Thus, it suffices to prove the implications $\text{(ii)} \implies \text{(iv)}$, $\text{(vi)} \implies \text{(i)}$, and $\text{(iv)} \implies \text{(v)}$ to complete the characterization.

\subsection*{(ii) $\implies$ (iv)}
Suppose that the Toeplitz operator $T_\mu : A^p_u(\mathbb{D}) \rightarrow A^q_u(\mathbb{D})$ is bounded. For an arbitrary sequence $\lambda = \{\lambda_k\} \in \ell^p$, we define the test function $G_t$ via Rademacher functions $r_k(t)$ on $[0,1]$ and  lattice $\{z_k\},$
\[
G_t(z) = \sum_{k=0}^\infty \lambda_k r_k(t) u(\Delta(z_k, r))^{\frac{p-1}{p}} K_{z_k}(z), \quad 0 < t < 1.
\]
Applying the kernel norm estimate $\Vert K_{z_k} \Vert_{A^p_u} \simeq u(\Delta(z_k, r))^{\frac{1-p}{p}}$ from Theorem~\ref{2.17}, we establish:
\[
\Vert G_t \Vert_{A^p_u} \lesssim \left( \sum_{k=0}^\infty |\lambda_k|^p \right)^{\frac{1}{p}} = \Vert \lambda \Vert_{\ell^p}.
\]
By the bounded assumption on $T_\mu$, we obtain:
\[
\Vert T_\mu G_t \Vert_{A^q_u}^q \lesssim \Vert T_\mu \Vert^q \Vert G_t \Vert_{A^p_u}^q \lesssim \Vert T_\mu \Vert^q \Vert \lambda \Vert_{\ell^p}^q.
\]
Integrating with respect to $t$ over $[0, 1]$, invoking Fubini's theorem, and applying Khinchine's inequality yield 
\[
B := \int_{\mathbb{D}} \left( \sum_{k=0}^\infty |\lambda_k|^2 u(\Delta(z_k, r))^{\frac{2(p-1)}{p}} |T_\mu K_{z_k}(z)|^2 \right)^{\frac{q}{2}} u(z) dA(z) \lesssim \Vert T_\mu \Vert^q \Vert \lambda \Vert_{\ell^p}^q.
\]
Let $\chi_k$ denote the characteristic function of $\Delta(z_k, 2r)$. Because the covering family $\{\Delta(z_k, 2r)\}$ has a finite multiplicity $N$ (by Lemma~\ref{l2.6}), we have the following:
\[
\sum_{k=0}^\infty |\lambda_k|^q u(\Delta(z_k, r))^{\frac{(p-1)q}{p}} \int_{\Delta(z_k, 2r)} |T_\mu K_{z_k}(z)|^q u(z) dA(z) \le \max\{1, N^{1-\frac{q}{2}}\} B.
\]
Using  Lemma~\ref{l1.1}, this transforms into
\[
\sum_{k=0}^\infty |\lambda_k|^q u(\Delta(z_k, r))^{\frac{(p-1)q}{p} + 1} |T_\mu K_{z_k}(z_k)|^q \lesssim \Vert T_\mu \Vert^q \Vert \lambda \Vert_{\ell^p}^q.
\]
On the other hand, using Lemma~\ref{tr}, Lemma~\ref{l4.2} and Lemma~\ref{l2.2}, we obtain
\[
|T_\mu K_{z_k}(z_k)| \ge \int_{\Delta(z_k, r)} |K_{z_k}(z)|^2 d\mu(z) \gtrsim \hat{\mu}_r(z_k) u(\Delta(z_k, r))^{-1}.
\]
Substituting this lower bound back into our inequality yields
\[
\sum_{k=0}^\infty |\lambda_k|^q u(\Delta(z_k, r))^{\frac{p-q}{p}} \hat{\mu}_r(z_k)^q \lesssim \Vert T_\mu \Vert^q \Vert \lambda \Vert_{\ell^p}^q.
\]
By  duality between $\ell^{\frac{p}{q}}$ and $\ell^{\frac{p}{p-q}}$, we deduce
\[
\sum_{k=0}^\infty \left( u(\Delta(z_k, r))^{\frac{p-q}{p}} \hat{\mu}_r(z_k)^q \right)^{\frac{p}{p-q}} \lesssim \Vert T_\mu \Vert^{\frac{pq}{p-q}}.
\]
This leads easily to  $\Vert \hat{\mu}_r \Vert_{L^{\frac{pq}{p-q}}} \lesssim \Vert T_\mu \Vert < \infty$, which proves condition (iv).

\subsection*{ (iv) $\implies$ (v)}
Let $s = p + 1 - \frac{p}{q}$. Since $0 < q < p < \infty$, the  condition $s > 0$ is algebraically equivalent to requiring that the conjugated ratio satisfies $\frac{pq}{p-q} > 1$. Because $f$ is a holomorphic function, the composition $|f|^s$ remains strictly subharmonic for any positive exponent $s > 0.$ By subharmonicity, Lemma~\ref{l1.1}, and Fubini's theorem, we calculate:
\begin{align*}
\int_{\mathbb{D}} |f(z)|^s d\mu(z) &\lesssim \int_{\mathbb{D}} \frac{\int_{\Delta(z, r)} |f(\xi)|^s u(\xi) dA(\xi)}{u(\Delta(z, r))} d\mu(z) \\
&= \int_{\mathbb{D}} \widehat{\mu}_r(\xi) |f(\xi)|^s u(\xi) dA(\xi).
\end{align*}
Applying H\"{o}lder's inequality to the right-hand side with respect to the area measure $u(\xi) dA(\xi)$ using the conjugate index pair $\left(\frac{pq}{p-q}, \frac{p}{s}\right),$ we have
\[
\int_{\mathbb{D}} \widehat{\mu}_r(\xi) |f(\xi)|^s u(\xi) dA(\xi) \le \left( \int_{\mathbb{D}} \widehat{\mu}_r(\xi)^{\frac{pq}{p-q}} dA(\xi) \right)^{\frac{p-q}{pq}} \left( \int_{\mathbb{D}} |f(\xi)|^p u(\xi) dA(\xi) \right)^{\frac{s}{p}}
\]
\[
= \Vert \widehat{\mu}_r \Vert_{L^{\frac{pq}{p-q}}} \Vert f \Vert_{A^p_u}^s.
\]
This means that  $\mu$ is $\left(p+1-\frac{p}{q}\right)$-Carleson measure for $A^p_u$.

\subsection*{ (v) $\implies$ (i)}

Let \(s=\frac{pq}{p-q}>0\), and put \(\ell=\min\{1,q\}\). Let \(\{f_n\}\) be a sequence in the unit ball of \(A^p_u(\mathbb D)\) such that \(f_n\to 0\) uniformly on compact subsets of \(\mathbb D\). We prove
\[
\lim_{n\to\infty}\|T_\mu f_n\|_{A^q_u}=0.
\]

Fix \(\varepsilon>0\). Since \(\widehat{\mu}_r\in L^s(\mathbb D)\), we may choose \(\rho\in(0,1)\) close to \(1\) enough that
\[
\|\widetilde{\mu}_r\|_{L^s(\mathbb D\setminus D_\rho)}<\varepsilon,
\]
where \(D_\rho=\{z: |z|<\rho\}\). Set \(R=\rho+r\) (assuming \(R<1\)). Decompose
\[
\mu=\mu_C+\mu_T,\qquad 
\mu_C=\mu\chi_{D_R},\qquad \mu_T=\mu\chi_{\mathbb D\setminus D_R}.
\]

For the  measure \(\mu_T\), observe that its \(r\)-averaging function satisfies
\[
\widehat{(\mu_T)}_r(z)=0 \quad\text{if } z\in D_\rho,
\]
 Hence
\[
\operatorname{supp}\widehat{(\mu_T)}_r \subset \mathbb D\setminus D_\rho,
\]
and moreover
\[
\widehat{(\mu_T)}_r(z)\le \widehat{\mu}_r(z),\qquad z\in\mathbb \D.
\]
Therefore, by the boundedness characterization from $(ii)\Rightarrow (iv)$,
\[
\|T_{\mu_T}\|_{A^p_u\to A^q_u}
\lesssim \|\widehat{(\mu_T)}_r\|_{L^s}
\le \|\widehat{\mu}_r\|_{L^s(\mathbb D\setminus D_\rho)}
< \varepsilon.
\]
Thus, since \(\|f_n\|_{A^p_u}\le 1\),
\[
\|T_{\mu_T} f_n\|_{A^q_u} \le C\varepsilon.
\]

Since \(\mu_C\) has compact support in $\mathbb D,$ $T_{\mu_C}$ is compact.  Therefore,
\[
\lim_{n\to\infty}\|T_{\mu_C} f_n\|_{A^q_u}=0.
\]
Combining the two estimates, for $q>1$ we use the triangle inequality, and for $0<q\le 1$ we use the quasi-norm inequality,
\[
\lim\limits_{n\to+\infty}\|T_\mu f_n\|_{A^q_u}^\ell
\le
\lim\limits_{n\to+\infty}\|T_{\mu_C} f_n\|_{A^q_u}^\ell
+
\limsup_{n\to+\infty}\|T_{\mu_T} f_n\|_{A^q_u}^\ell
\le (C\varepsilon)^\ell.
\]
Letting \(\varepsilon\to0^+\), we obtain
\[
\lim_{n\to+\infty}\|T_\mu f_n\|_{A^q_u}=0,
\]
which proves  that \(T_\mu:A^p_u(\mathbb D)\to A^q_u(\mathbb D)\) is compact.

The case \(s\le 0\) is covered separately by the boundary-vanishing criterion in $(iv) \Rightarrow (v)$; in that situation the measure already has compact support near the boundary, and the conclusion is immediate. Thus the implication (v) \(\Rightarrow\) (i) is established.
\end{proof}

\section{Proof of Theorem \ref{td} and Theorem \ref{sc}}
First, we recall some definitions about essential norm. Let $\mathcal{K}$ be the set of all compact operators on a Banach space $\mathcal{B}$. For any bounded linear operator $T:\mathcal{B}\to\mathcal{B}$, the essential norm of $T$ is defined by
\begin{equation*}
	\|T\|_e=\inf\{\|T-K\|,K\in\mathcal{K}\}.
\end{equation*}
And $\|T\|_e=0$ if and only if $T\in\mathcal{K}$.


	


Now we give the proof of Theorem \ref{td}.

\begin{proof}
	When $0<p\leq q<\infty$, by Lemma \ref{tr}, it suffices to prove that, 
	\begin{equation*}
		\limsup_{|z|\to1}\widetilde{\mu_t}(z)u(\Delta(z,r))^{\frac{q-p}{pq}}\lesssim \|T_\mu\|_e\lesssim\limsup_{|z|\to1}\widetilde{\mu_t}(z)u(\Delta(z,r))^{\frac{q-p}{pq}}.
	\end{equation*}
	
	We first prove the lower estimate.
	In the proof of Theorem \ref{tb} (i)$\Rightarrow$(ii), we obtained,
	 \begin{equation*}
    	\frac{|\widetilde{\mu_t}(z)|}{u(\Delta(z,r))^{\frac{1}{p}-\frac{1}{q}}}\lesssim\|T_\mu k_{p,z}\|_{A_u^q}.
    \end{equation*}
    For any compact operator $K:{A^p_u\to A^q_u}$, using proposition \ref{p2}, we obtain,
    \begin{equation*}
    	\|Kk_{p,z}\|_{A_u^q}\to0 \text{ as }|z|\to1^-
    \end{equation*}
    Then
    \begin{eqnarray*}
    	\|T_\mu-K\|&\geq&\limsup_{|z|\to1^-}\|(T_\mu-K)k_{p,z}\|_{A^q_u}\\
    	&\geq&\limsup_{|z|\to1^-}\|T_\mu k_{p,z}\|_{A^q_u}\\
    	&\gtrsim&\limsup_{|z|\to1^-}\widetilde{\mu_t}(z)u(\Delta(z,r))^{\frac{q-p}{pq}}
    \end{eqnarray*}
	Since $K$ is arbitrary, we have,
	\begin{equation*}
		\|T_\mu\|_e\gtrsim\limsup_{|z|\to1^-}\widetilde{\mu_t}(z)u(\Delta(z,r))^{\frac{q-p}{pq}}.
	\end{equation*}
	
	For the upper estimate, $T_{\mu_0}:A^p_u\to A^q_u$ is compact when $\mu_0$ is defined under the assertions in Theorem \ref{tc}. Using the norm equivalence from Theorem \ref{tb}, we obtain that,
	\begin{equation*}
		\|T_\mu-T_{\mu_0}\|\simeq\|\widehat{(\mu-\mu_0)_r}u(\Delta(z,r))^{\frac{q-p}{pq}}\|_{L^\infty}=\sup_{z\in\D}\widehat{\mu_r}(z)u(\Delta(z,r))^{\frac{q-p}{pq}}
	\end{equation*}
	Let $|z|\to1^-$
	\begin{equation*}
	    \|T_\mu\|\leq\|T_\mu-T_{\mu_0}\|\simeq\limsup_{|z|\to1^-}\widehat{\mu_r}(z)u(\Delta(z,r))^{\frac{q-p}{pq}}.
	\end{equation*}
	
\end{proof}

And we note that when $0<q<p<\infty$, by Theorem \ref{qlp}, $T_\mu$ is bounded if and only if $T_\mu$ is compact. Consequently, the essential norm of $T_\mu$ is zero.

Let $T$ be a compact operator on a Hilbert space $H$, and let $T^*$ be the Hilbert adjoint of $T$.  By the spectral theorem, there exist orthonormal bases $\{\varphi_k\}$ and $\{\psi_k\}$ such that for $x\in H$,
\begin{eqnarray*}
	Tx=\sum_{k=1}^\infty\lambda_k\langle x,\varphi_k\rangle\psi_k,
\end{eqnarray*}
where $\lambda_k$s are singular values of $T$. For $0<p<\infty$, the Schatten class $S_p=S_p(H)$ consists of those compact operators $T$ on $H$ whose singular value sequences $\{\lambda_n\}\in \ell^p$. For $P\ge 1$, the class $S_p$ forms a Banach space with the norm $\|T\|_p=\left(\sum_n|\lambda_n|^p\right)^{1/p}$.

Let $T$ be a compact operator and $h:\mathbb{R}^+\to\mathbb{R}^+$ be a continuous increasing function. We say that $T\in S_h(H)$ if there exists a positive constant $C>0$ such that
\begin{eqnarray*}
	\sum_{n=1}^\infty h(C\lambda_n(T))<\infty.
\end{eqnarray*}

Then, we give the proof of Theorem \ref{sc}.
\begin{proof}
	 Suppose that $T_\mu\in S_h(A_u^2)$ . Let $\{e_k\}$ be an orthonormal set for $A^2_u$ and $T_\mu=\sum_{k=1}\lambda_k\langle\cdot,e_k\rangle e_k$ be the spectral decomposition of the positive operator $T_\mu$, where $\lambda_k$s are the eigenvalues of $T_\mu$. Since $\{e_k\}$ forms an orthonormal basis, we have $\sum_k|\langle k_z,e_k\rangle_{A_u^2}|^2=1$ by  Parseval Theorem. By Jensen's formula, reproducing formula and the kernel estimate from Theorem~\ref{2.17}, we obtain,
	 \begin{eqnarray*}
	 	\int_{\mathbb{C}^n}h(C\widetilde{\mu}_2(z))u(\Delta(z,r))^{-1}u(z)dA(z)&=&\int_{\mathbb{C}^n}h(C\langle T_\mu k_z,k_z\rangle_{A_u^2})u(\Delta(z,r))^{-1}u(z)dA(z)\\
	 	&=&\int_{\mathbb{C}^n}h\left(\sum_{k=1}^\infty C \lambda_k\left|\langle k_z,e_k\rangle_{A_u^2}\right|^2\right)u(\Delta(z,r))^{-1}u(z)dA(z)\\
	 	&\leq&\int_{\mathbb{C}^n}\sum_{k=1}^\infty h(C\lambda_k)\left|\langle k_z,e_k\rangle_{A_u^2}\right|^2u(\Delta(z,r))^{-1}u(z)dA(z)\\
	 	&=&\int_{\mathbb{C}^n}\sum_{k=1}^\infty h(C\lambda_k)\|K_z\|_{A^2_u}^{-2}\left|e_k(z)\right|^2u(\Delta(z,r))^{-1}u(z)dA(z)\\
	 	&\simeq&\sum_{k=1}^\infty h(C\lambda_k)\int_{\mathbb{C}^n}\left|e_k(z)\right|^2u(z)dA(z)\\
	 	&=&\sum_{k=1}^\infty h(C\lambda_k)<\infty.
	 \end{eqnarray*}
	 
	 Conversely, we assume that $\int_{\mathbb{C}^n}h(C\widetilde{\mu}_2(z))u(\Delta(z,r))^{-1}u(z)dA(z)<\infty$ for some constant $C>0$. By applying Lemma \ref{l1.1},  Fubini's Theorem and Lemma \ref{ba1}, we obtain
	 \begin{eqnarray*}
	 	\langle T_\mu e_k,e_k\rangle_{A_u^2}&=&\int_{\C^n}|e_k(z)|^2d\mu(z)\\
	 	&\lesssim&\int_{\C^n}\widehat{\mu}_r(z) |e_k(z)|^2u(z)dA(z)\\
	 	&\simeq&\int_{\C^n}\widetilde{\mu}_2(z) |e_k(z)|^2u(z)dA(z),\\
	 \end{eqnarray*}
	 
	 Then, using Jensen's formula again, we obtain,
	 \begin{eqnarray*}
	 	\sum_{k=1}^\infty h(C\langle T_\mu e_k,e_k\rangle_{A_u^2})&\lesssim& \int_{\mathbb{C}^n}h(C\widetilde{\mu}_2(z))\left(\sum_{k\geq 1}|e_k(z)|^2\right)u(z)dA(z)\\
	 	&=&\int_{\mathbb{C}^n}h(C\widetilde{\mu}_2(z))\|K_z\|_{A_u^2}^2u(z)dA(z)\\
	 	&\simeq&\int_{\mathbb{C}^n}h(C\widetilde{\mu}_2(z))u(\Delta(z,r))^{-1}dA(z)<\infty,
	 \end{eqnarray*}
	 which implies $T_\mu\in S_h(A_u^2)$.
\end{proof}

Taking $h(t)=t^p$ in Theorem \ref{sc}, we can get the following corollary which characterize Schatten class of Toeplitz operators on $A_u^2$.
\begin{cor}
	Let $1\leq p<\infty$ and $\mu$ a positive Borel measure on $\mathbb{C}^n$ such that the Toeplitz operator $T_\mu:A_u^2\to A_u^2$ is compact. Then $T_\mu\in S_p(A_u^2)$ if and only if
	\begin{eqnarray*}
		\widetilde{\mu}_2(z)\left(u(\Delta(z,r))^{-1}\right)^{1/p}\in A_u^p.
	\end{eqnarray*}
\end{cor}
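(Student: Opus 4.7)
The plan is to derive this corollary as a direct specialization of Theorem \ref{sc} to the power function $h(t)=t^p$. First I would verify that this choice of $h$ satisfies the structural hypotheses demanded by the theorem: continuity and monotonicity on $\mathbb{R}^+$ are immediate, and convexity holds precisely because $h''(t)=p(p-1)t^{p-2}\geq 0$ when $p\geq 1$, which matches the range assumed in the corollary. This is the only place where the restriction $p\geq 1$ enters.

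Second, I would reinterpret the Schatten membership. The definition of $S_p(A_u^2)$ in terms of singular values gives $T_\mu\in S_p$ exactly when $\sum_k\lambda_k^p<\infty$, so that $T_\mu\in S_p$ coincides with $T_\mu\in S_h$ for this particular $h$. Because $h$ is positively homogeneous of degree $p$, the statement in Theorem \ref{sc} that $\sum_k h(C\lambda_k)<\infty$ for \emph{some} constant $C>0$ is equivalent to the simpler statement $\sum_k h(\lambda_k)<\infty$, and in the same way the integral condition in the theorem becomes equivalent, after absorbing $C^p$, to
\begin{equation*}
\int_{\mathbb{C}^n}\widetilde{\mu}_2(z)^p\,\frac{u(\Delta(z,r))}{(1-|z|)^4}\,u(z)\,dA(z)<\infty,
\end{equation*}
free of any auxiliary constant.

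Finally I would recognise this integral as the $p$-th power of the norm appearing in the corollary: by the very definition of $\|\cdot\|_{A^p_u}$ (applied to the non-analytic function on the left, interpreted in the $L^p(u)$ sense), we have
\begin{equation*}
\left\|\widetilde{\mu}_2(z)\left(\frac{u(\Delta(z,r))}{(1-|z|)^4}\right)^{1/p}\right\|_{A^p_u}^p=\int_{\mathbb{C}^n}\widetilde{\mu}_2(z)^p\,\frac{u(\Delta(z,r))}{(1-|z|)^4}\,u(z)\,dA(z),
\end{equation*}
so the finiteness of this integral is exactly the claimed membership condition.

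There is no real obstacle here beyond bookkeeping; the one potentially delicate point is the equivalence between the $C$-dependent and $C$-free formulations, which is handled by the homogeneity $h(Ct)=C^p h(t)$ of the power function and thus does not survive outside the Schatten scale. No additional estimate beyond Theorem \ref{sc} is needed.
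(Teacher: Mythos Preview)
Your proposal is correct and follows exactly the approach indicated in the paper, which simply states that the corollary is obtained by taking $h(t)=t^p$ in Theorem \ref{sc}. Your write-up is in fact more detailed than the paper's, spelling out why $p\geq 1$ is needed for convexity, how the homogeneity of $t^p$ absorbs the constant $C$, and why the resulting integral is the $p$-th power of the stated $A_u^p$ norm.
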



\begin{thebibliography}{99}

\bibitem{B} Bekoll\'{e} D., In\'{e}galit\'{e} \`{a} poids pour le projecteur de Bergman dans la boule unit\'{e} de $\mathbb{C}^n$, Studia Math. 71, 305-323, 1981/1982.

\bibitem{BB} Bekoll\'{e} D. and Bonami A., In\'{e}galit\'{e}s \`{a} poids pour le noyau de Bergman, C. R. Acad. Sci. Paris S\'{e}r. A-B 286, A775-A778, 1978.

\bibitem{berndtsson2001} Berndtsson B., Weighted estimates for the $\bar{\partial}$-equation, in: Complex Analysis and Geometry: Proceedings of a Conference at The Ohio State University, June 3--6, 1999, McNeal J.D. (ed.), De Gruyter, Berlin, Boston, 2001.

\bibitem{Ch} Chac\'{o}n G.R., Toeplitz operators on weighted Bergman spaces, J. Funct. Space. Appl. 2013, Article ID 753153, 5 pages.

\bibitem{Co1} Constantin O., Discretizations of integral operators and atomic decompositions in vector-valued weighted Bergman spaces, Integr. Equ. Oper. Theory 59, 523-554, 2007.

\bibitem{Co} Constantin O., Carleson embeddings and some classes of operators on weighted Bergman spaces, J. Math. Anal. Appl. 365, 668-682, 2010.

\bibitem{delin1998} Delin H., Pointwise estimates for the weighted Bergman projection kernel in $\mathbb{C}^n$, using a weighted $L^2$ estimate for the $\bar{\partial}$ equation, Ann. Inst. Fourier (Grenoble) 48, 967-997, 1998.

\bibitem{EMMN} El-Fallah O., Mahzouli H., Marrhich I., Naqos H., Asymptotic behavior of eigenvalues of Toeplitz operators on the weighted analytic spaces, J. Funct. Anal. 270, 4614-4630, 2016.

\bibitem{HLS} Hu Z., Lv X., and Schuster A.P., Bergman spaces with exponential weights, J. Funct. Anal. 276, 1402-1429, 2019.

\bibitem{L} Luecking D., Representation and duality in weighted spaces of analytic functions, Indiana Univ. Math. J. 34, 319-336, 1985.

\bibitem{LR} Lin P. and Rochberg R., Trace ideal criteria for Toeplitz and Hankel operators on the weighted Bergman spaces with exponential type weights, Pacific J. Math. 173, 127-146, 1996.
\bibitem{Pau} Pau J and Arroussi H, Reproducing kernel estimates, bounded projections and duality on
large weighted Bergman spaces, J. Geom. Anal.
\bibitem{PeR} Pel\'{a}ez J.A. and R\"{a}tty\"{a} J., Two weight inequality for Bergman projection, J. Maths. Pures Appl. 105, 102-130, 2016.

\bibitem{PPR} Pel\'{a}ez J.A., Per\"{a}l\"{a} A. and R\"{a}tty\"{a} J., Hankel operators induced by radial Bekoll\'{e}-Bonami weights on Bergman spaces, Math. Z. 296, 211-238, 2020.

\bibitem{PER} Pel\'{a}ez J.A., de la Rosa E. and R\"{a}tty\"{a} J., Bergman Projection on Lebesgue Space Induced by Doubling Weight, Results Math. 79, 27, 2024.

\bibitem{PR} Pott S. and Reguera M.C., Sharp Bekolle estimates for the Bergman projection, J. Funct. Anal. 265, 3233-3244, 2013.

\bibitem{RTW} Rahm R., Tchoundja E., and Wick B.D., Weighted estimates for the Berezin transform and Bergman projection on the unit ball in $\mathbb{C}^n$, Math. Z., to appear.

\bibitem{GreWu} Greene R.E. and Wu H., $C^\infty$ approximations of convex, subharmonic, and plurisubharmonic functions, Ann. Sci. \'{E}cole Norm. Sup. (4) 12, 47-84, 1979.

\bibitem{T} Tong C. and Li J., Carleson measures on the weighted Bergman spaces with B\'{e}koll\'{e} weights, Chin. Ann. Math. Ser. B 42, 583-600, 2021.

\bibitem{TLA} Tong C., Li J. and Arroussi H., The Berezin transform of Toeplitz operators on the weighted Bergman space, Potential Analysis 57, 263-281, 2022.

\bibitem{TY} Tong C. and Yuan C., An integral operator preserving $s$-Carleson measure on the unit ball, Ann. Acad. Sci. Fen. Math. 40, 361-373, 2015.

\bibitem{Z2} Zhu K., Spaces of Holomorphic Functions in the Unit Ball, Grad. Texts in Math., Springer, 2005.

\bibitem{zhu1} Zhu K., Operator Theory in Function Spaces, Second Edition, Math. Surveys and Monographs 138, American Mathematical Society, Providence, Rhode Island, 2007.

\end{thebibliography}
\end{document}